\documentclass[11pt]{amsart}

\usepackage[T1]{fontenc}
\usepackage[utf8]{inputenc}
\usepackage{lmodern}
\usepackage{amsmath,amssymb,amsthm,mathtools,mathrsfs}
\usepackage[a4paper,margin=30mm]{geometry}
\usepackage[hidelinks]{hyperref}
\usepackage[nameinlink,capitalise,noabbrev]{cleveref}

\newtheorem{theorem}{Theorem}[section]
\newtheorem{proposition}[theorem]{Proposition}
\newtheorem{lemma}[theorem]{Lemma}
\newtheorem{conjecture}[theorem]{Conjecture}
\theoremstyle{definition}

\theoremstyle{remark}

\newcommand{\Sym}{\Lambda}

\newcommand{\cP}{\mathcal P}
\newcommand{\cQ}{\mathcal Q}
\newcommand{\cH}{\mathcal H}

\newcommand{\Hall}{\mathrm{Hall}}

\title{Jack Content Operators and the Deformed
\texorpdfstring{$\mathcal W_{1+\infty}$}{W(1+infinity)} Algebra}
\author{Jean-Yves Thibon}
\address{Laboratoire d'Informatique Gaspard-Monge,
Universit\'e Gustave Eiffel, CNRS, ESIEE Paris,
F-77454 Marne-la-Vall\'ee, France}
\email{jean-yves.thibon@univ-eiffel.fr}
\subjclass[2020]{Primary 05E05; Secondary 17B65, 20C30, 33D52}
\keywords{Jack polynomials, contents, cut-and-join operators,
Virasoro algebra, deformed $\mathcal W_{1+\infty}$ algebra,
spherical degenerate double affine Hecke algebra, affine Yangian}
\date{}

\begin{document}
\begin{abstract}
Frenkel and Wang obtained a representation of the Virasoro algebra by
commuting Goulden's cut-and-join operator with the Heisenberg generators.
A vertex-operator construction in \cite{LascouxThibon} extends this
representation to $\mathcal W_{1+\infty}$ by means of differential operators whose
eigenvalues are the power sums of the contents of a Young diagram.  We
develop a Jack deformation in the spherical degenerate double affine
Hecke algebra and its stable limit.  Starting from the
Heckman--Polychronakos integrals, we isolate
operators whose eigenvalues are the power sums of the $\alpha$-contents.
Because the Goulden--Jackson product is defined in the convention dual to
the usual Calogero--Sutherland Hamiltonians, the multiplication operators
$\Delta_\mu(\alpha)$ are obtained by taking Hall adjoints.  This gives
conceptual derivations of the Jack cut-and-join operator and of the
stable $3$-cycle operator.  A normal-ordering construction due to
Sergeev and Veselov makes the latter calculation explicit and suggests
an integral form over $\mathbb Z[\alpha]$.  The commutators of the
cut-and-join operator contain one half of the usual Feigin--Fuchs
realization, but this Virasoro completion is not the deformation of the
Frenkel--Wang construction.  The latter takes place in the deformed
$\mathcal W_{1+\infty}$ algebra $\mathbf{SH}^c$, equivalently in the affine
Yangian of $\mathfrak{gl}_1$: in our normalization its first nontrivial
Cartan mode is
$\psi_3=3\Delta_2(\alpha)+2(\alpha-1)E$, and its commutators with the
first raising and lowering modes recursively generate the remaining
currents.  At $\alpha=1$ these relations
specialize to the central-charge-one $\mathcal W_{1+\infty}$
representation used by Lascoux and the author.
\end{abstract}
\maketitle

\section{Introduction}

The direct sum of the class algebras of the symmetric groups admits a
natural Fock-space realization.  Under the Frobenius characteristic map,
convolution by the sum of all transpositions becomes Goulden's
cut-and-join operator.  Frenkel and Wang observed that its commutators
with the creation and annihilation operators of the Heisenberg algebra
produce the modes of a Virasoro algebra of central charge one
\cite{FrenkelWang}.

The transposition sum can also be written as the first power sum of the
Jucys--Murphy elements.  This observation suggests replacing it by all
power sums $p_r(\Xi_n)$.  The construction of \cite{LascouxThibon}
uses a single vertex operator encoding the corresponding
differential operators for all $r$ and all symmetric groups.  Its
coefficients generate a representation of $\mathcal W_{1+\infty}$.
Since symmetric functions of the Jucys--Murphy elements generate the
center, the same formalism also contains the operators of multiplication
by normalized conjugacy classes and explains their stability.

The aim of the present paper is to deform this picture from Schur
functions to Jack functions.  The finite-rank home of the construction
is not an arbitrarily deformed class algebra.  The commuting
Dunkl--Cherednik operators form the spectral commutative subalgebra of
a degenerate affine Hecke algebra, and their symmetric polynomials act
through the spherical degenerate double affine Hecke algebra.  Its
stable limit is the algebra $\mathbf{SH}^c$ of Schiffmann and Vasserot,
a deformation of $U(\mathcal W_{1+\infty})$
\cite{SchiffmannVasserotCherednik,ArbesfeldSchiffmann}.  After the usual
parameter identification and completion, this is also the affine
Yangian of $\mathfrak{gl}_1$ \cite{Tsymbaliuk}.  Thus the undeformed
$U(\mathcal W_{1+\infty})$ statement belongs to the Schur
specialization; the generic Jack construction lives naturally in its
Yangian deformation.

There are two equivalent, but conventionally opposite, sides of the
problem.  On the spectral side, the
Calogero--Sutherland, Sekiguchi--Debiard and Dunkl--Heckman Hamiltonians
act diagonally on ordinary Jack functions \cite{NazarovSklyanin,SergeevVeselov}.
On the combinatorial side, the Jack deformation of the class-algebra
product is naturally expressed in a basis dual to the ordinary Jack
basis.  Consequently, the operators $\Delta_\mu(\alpha)$ used here are
the \emph{Hall adjoints} of the corresponding spectral Hamiltonians.
This reversal is harmless, but it must be kept visible throughout the
paper.

Our starting point is the family of Heckman--Polychronakos operators
\begin{equation}\label{eq:intro-HP}
 \cP_m^{(N)}=\sum_{i=1}^N(x_i\mathcal D_i)^m,
 \qquad
 \mathcal D_i=\frac{\partial}{\partial x_i}
 +\theta\sum_{j\ne i}\frac{1-s_{ij}}{x_i-x_j},
 \qquad \theta=\alpha^{-1}.
\end{equation}
Their explicit spectrum on Jack polynomials was recently written in a
particularly useful generating-function form \cite{DunklGorin}.  After
a translation removing the number of
variables, division by the vacuum eigenvalue, and a logarithm, this
generating series yields commuting operators $\cQ_r$ whose eigenvalues
are finite-difference power sums of the shifted row coordinates.  The
ordinary power sums of $\alpha$-contents are then finite triangular
linear combinations of the $\cQ_r$, obtained from Bernoulli polynomials.

The first identity is
\begin{equation}\label{eq:intro-Delta2}
 \Delta_2(\alpha)=
 \left(
  \frac{\alpha^2}{3}\cQ_3
  +\frac{\alpha(1-\alpha)}2\cQ_2
 \right)^{\perp_{\Hall}}.
\end{equation}
It recovers the Goulden--Jackson deformed cut-and-join operator.  Its commutators
with the Heisenberg modes contain the expected quadratic expressions
together with the linear correction characteristic of the Jack
parameter.  One half of these operators has the Feigin--Fuchs form and
therefore admits the familiar Virasoro completion.  This observation is
useful, but it is not by itself the Jack analogue of the
Frenkel--Wang construction: that analogue requires the raising, lowering
and Cartan currents of the affine Yangian.  The defining Yangian
relations give a recursive completion of the commutator calculation,
and we determine explicitly the normalization of the first Cartan,
raising and lowering modes.

At the next order, the normally ordered Dunkl--Heckman operator at
infinity of Sergeev and Veselov gives a closed expression for
$\cQ_4$.  This proves an explicit formula for $\Delta_3(\alpha)$ and
reveals a cancellation mechanism for the auxiliary dimension and the
negative powers of $\alpha$.  In the first two nontrivial cases, the
resulting coefficients belong to $\mathbb Z[\alpha]$.

The paper is organized as follows.  We first recall the classical
cut-and-join, Virasoro and $\mathcal W_{1+\infty}$ constructions.  We
then fix the Jack and duality conventions and identify the spherical
Cherednik framework.  The stable logarithmic
Hamiltonians $\cQ_r$ are constructed from Heckman--Polychronakos
operators, and their spectra are converted into content moments.  The
Sergeev--Veselov recursion is then used to normally order $\cQ_4$.  We
derive $\Delta_2(\alpha)$ and $\Delta_3(\alpha)$, discuss the resulting
integral normal forms, and compare the commutators of
$\Delta_2(\alpha)$ with the Feigin--Fuchs modes.  We finally identify the
correct completion mechanism in the Fock representation of the affine
Yangian, and explain how it specializes to the classical construction of
Frenkel--Wang, Lascoux and the author..

\section{The classical construction}\label{sec:classical}

\subsection{Heisenberg modes and Hall duality}

Let $\Sym=\mathbb Q[p_1,p_2,\ldots]$, endowed with the Hall scalar
product
\begin{equation}
 \langle p_\lambda,p_\mu\rangle_{\Hall}
 =\delta_{\lambda\mu}z_\lambda.
\end{equation}
We use the Heisenberg modes
\begin{equation}
 a_{-k}=p_k,\qquad
 a_k=D_k:=k\frac{\partial}{\partial p_k},
 \qquad k\geq1,
\end{equation}
so that $a_k=a_{-k}^{\perp_{\Hall}}$ and
$[a_k,a_{-\ell}]=k\delta_{k\ell}$.

\subsection{Goulden's operator and Virasoro}

Let $K_{(2,1^{n-2})}$ be the characteristic function of the
transposition class of $\mathfrak S_n$.  Under the Frobenius
characteristic map, convolution by this function is represented by
Goulden's operator \cite{Goulden}
\begin{equation}\label{eq:classical-cutjoin}
 \mathscr G=\frac12
 \sum_{i,j\geq1}
 \left(p_ip_jD_{i+j}+p_{i+j}D_iD_j\right).
\end{equation}
Indeed, multiplication of a permutation by a transposition either joins
two cycles or cuts one cycle into two.  The first summand in
\eqref{eq:classical-cutjoin} records joins and the second one cuts.  The
factor $1/2$ removes the double counting of the ordered pair $(i,j)$.

Our stable series are normalized by reduced cycle type.  With the
notation used in the rest of this paper, this amounts to
\begin{equation}\label{eq:Delta2-vs-Goulden}
 \Delta_2(1)=2\mathscr G.
\end{equation}
This factor two will be retained throughout; it is the first of the two
normalization differences with the integrable-systems literature, the
second being Hall duality.

Extend the Heisenberg modes to all nonzero integers by the conventions
above and define
\begin{equation}\label{eq:classical-Virasoro}
 L_k=\frac12\sum_{r\in\mathbb Z}:a_{k-r}a_r:,
 \qquad a_0=0.
\end{equation}
Normal ordering places the creation operators to the left of the
annihilation operators.  A direct calculation gives
\begin{equation}\label{eq:Goulden-Heisenberg}
 [\mathscr G,a_k]=-kL_k,
 \qquad k\ne0.
\end{equation}
Moreover,
\begin{align}
 [L_k,a_l]&=-l a_{k+l},\label{eq:classical-La}\\
 [L_k,L_l]&=(k-l)L_{k+l}
 +\frac{k^3-k}{12}\delta_{k,-l}.\label{eq:classical-Vir-rel}
\end{align}
Thus the cut-and-join operator, together with the Heisenberg modes,
generates a Virasoro algebra of central charge one.  Frenkel and Wang
obtained this construction group-theoretically, more generally for
wreath products \cite{FrenkelWang}.

\subsection{Jucys--Murphy elements and content eigenvalues}

For $2\leq i\leq n$, let
\begin{equation}
 \xi_i=(1,i)+(2,i)+\cdots+(i-1,i),
 \qquad \xi_1=0,
\end{equation}
and write $\Xi_n=\{\xi_1,\ldots,\xi_n\}$.  Symmetric polynomials in the
$\xi_i$ are central \cite{Jucys}.  On the irreducible module indexed by $\lambda$,
the central element $f(\Xi_n)$ acts by the scalar
\begin{equation}\label{eq:JM-content-spectrum}
 f(C(\lambda)),
 \qquad
 C(\lambda)=\{j-i:(i,j)\in\lambda\}.
\end{equation}
In particular, the sum of all transpositions is
$p_1(\Xi_n)$ and its eigenvalue is the sum of the contents.

Let $\mathscr D_r$ denote the operator, acting simultaneously on all
homogeneous components of $\Sym$, which represents multiplication by
$p_r(\Xi_n)$.  It is convenient to collect these operators into
\begin{equation}\label{eq:classical-D-generating}
 \mathscr D(t)=\sum_{r\geq1}\mathscr D_r\frac{t^r}{r!}.
\end{equation}
Its eigenvalue on $s_\lambda$ is
\begin{equation}\label{eq:classical-D-spectrum}
 \sum_{\square\in\lambda}
 \bigl(e^{t c(\square)}-1\bigr).
\end{equation}
Notice that $\mathscr D_1=\mathscr G$ with the Frenkel--Wang
normalization.

\subsection{The vertex operator construction}

Put $q=e^t$ and introduce the normally ordered vertex operator
\begin{equation}\label{eq:LT-vertex}
\begin{split}
 V(z;q)
 &={}:\exp\left(
   \sum_{k\ne0}\frac{1-q^{-k}}{k}z^{-k}a_k
  \right):\\
 &=\exp\left(
   \sum_{k\geq1}(q^k-1)p_k\frac{z^k}{k}
  \right)
  \exp\left(
   \sum_{k\geq1}(1-q^{-k})z^{-k}
   \frac{\partial}{\partial p_k}
  \right).
\end{split}
\end{equation}
If $V_0(q)$ is its constant term in $z$, then
\begin{equation}\label{eq:LT-D-vertex}
 \mathscr D(t)
 =\frac{V_0(q)-1}{(q-1)(1-q^{-1})}-E,
 \qquad
 E=\sum_{k\geq1}p_kD_k.
\end{equation}
Equivalently, using plethystic notation and the complete functions,
\begin{equation}\label{eq:LT-D-differential}
 \mathscr D(t)=
 \frac{q}{(q-1)^2}
 \sum_{m\geq1}q^{-m}
 h_m[(q-1)X]D_{h_m[(q-1)X]}-E.
\end{equation}
Formula \eqref{eq:LT-D-vertex} is the bosonization of the content
eigenvalue \eqref{eq:classical-D-spectrum}.

The elementary commutator
\begin{equation}\label{eq:V-Heisenberg}
 [V(z;q),a_k]=z^k(1-q^k)V(z;q)
\end{equation}
shows that all modes of $V(z;q)$ are generated from its zero mode by
commutation with Heisenberg.  Define $T_k(q)$ by the normalization
\begin{equation}
 T_k(q)=-\frac{q^{-1}}{1-q^{-1}}[z^{-k}](V(z;q)-1).
\end{equation}
Their commutation relations are
\begin{equation}\label{eq:Tk-commutation}
\begin{split}
 [T_k(a),T_l(b)]={}&(a^l-b^k)T_{k+l}(ab)\\
 &+\delta_{k,-l}\frac{a^{-k}-b^{-l}}{1-ab}.
\end{split}
\end{equation}
These are the relations of the central extension of the Lie algebra of
differential operators on the circle.  Expanding
\begin{equation}
 T_k(e^t)=\sum_{r\geq0}\frac{t^r}{r!}T_{k,r}
\end{equation}
identifies the $T_{k,r}$ with the standard generators of a charge-one
representation of $\mathcal W_{1+\infty}$.  In particular,
\begin{equation}
 T_{k,1}=a_k,
 \qquad T_{k,2}=2L_k.
\end{equation}
Since each $\mathscr D_r$ is a triangular linear combination of the
zero modes $T_{0,s}$, the commutators
\begin{equation}
 [\mathscr D_r,a_k],
 \qquad r\geq1,\quad k\ne0,
\end{equation}
generate $\mathcal W_{1+\infty}$ \cite{LascouxThibon}.  For $r=1$ this
reduces to \eqref{eq:Goulden-Heisenberg}.

\subsection{Stable conjugacy classes}

We recall the symmetric-function formulation of class multiplication,
following \cite{LascouxThibon,ThibonStable}.  For a fixed $n$, the
Frobenius characteristic identifies the centre $Z\mathbb C\mathfrak S_n$
with $\Lambda_n$.  We denote the transported product by $\times$ and call
it the \emph{class product}.  Thus
\begin{equation}\label{eq:class-product-Schur}
 s_\lambda\times s_\mu
 =\delta_{\lambda\mu}\frac{s_\lambda}{f^\lambda},
 \qquad \lambda,\mu\vdash n,
\end{equation}
where $f^\lambda$ is the dimension of the irreducible representation of
shape $\lambda$.

There is a useful coalgebraic reformulation.  Put
\begin{equation}\label{eq:class-kernel}
 \Phi_n(X,Y,Z)=
 \sum_{\lambda\vdash n}\frac{1}{f^\lambda}
 s_\lambda(X)s_\lambda(Y)s_\lambda(Z)
\end{equation}
and contract the $Z$ alphabet with the Hall scalar product:
\begin{equation}\label{eq:class-coproduct-kernel}
 \Gamma_n(f)(X,Y)
 =\bigl\langle\Phi_n(X,Y,Z),f(Z)\bigr\rangle_{\Hall,Z}.
\end{equation}
Since the Schur functions are Hall orthonormal,
\begin{equation}\label{eq:class-coproduct-Schur}
 \Gamma_n(s_\lambda)=\frac1{f^\lambda}s_\lambda\otimes s_\lambda.
\end{equation}
The product and coproduct are Hall dual:
\begin{equation}\label{eq:class-duality}
 \langle f\times g,h\rangle_{\Hall}
 =\langle f\otimes g,\Gamma_n(h)\rangle_{\Hall\otimes\Hall}.
\end{equation}
Equations \eqref{eq:class-product-Schur} and
\eqref{eq:class-coproduct-Schur} show immediately that the two
definitions agree.

To place all degrees in one object, let
\begin{equation}
 \widehat\Lambda=\prod_{n\geq0}\Lambda_n
\end{equation}
and extend $\times$ componentwise.  If $f\in\Lambda_m$ is homogeneous,
its \emph{stable lift} is
\begin{equation}\label{eq:stable-lift-definition}
 \widehat f=\frac{m!f}{(1-p_1)^{m+1}}
 =\sum_{n\geq m}(n)_m f p_1^{n-m}.
\end{equation}
In particular,
\begin{equation}\label{eq:stable-power-sum-definition}
 \widehat p_\mu
 =\sum_{n\geq|\mu|}(n)_{|\mu|}
 p_{\mu,1^{n-|\mu|}}.
\end{equation}
Parts equal to $1$ in $\mu$ are retained: combinatorially, they mark
distinguished fixed points.  When $\mu$ has no part equal to $1$, the
degree-$n$ component is the Frobenius characteristic of the corresponding
normalized conjugacy class.  The span of the $\widehat p_\mu$ is closed
under $\times$ and realizes the Ivanov--Kerov stable class algebra.  For
example,
\begin{equation}\label{eq:stable-p2-classical-example}
 \widehat p_2\times\widehat p_2
 =\widehat p_{22}+4\widehat p_3+2\widehat p_{11}.
\end{equation}

The preceding construction contains more than the content power sums.
By a theorem of Kerov and Olshanski \cite{KerovOlshanski}, normalized
conjugacy classes are polynomial functions of the shifted
row coordinates
\begin{equation}\label{eq:shifted-row-powers}
 \widetilde p_r(\lambda)=
 \sum_{i\geq1}\bigl((\lambda_i-i)^r-(-i)^r\bigr).
\end{equation}
The operators with eigenvalues \eqref{eq:shifted-row-powers} are
triangular linear combinations of the $\mathscr D_r$.  It follows that
the multiplication operators associated with normalized conjugacy
classes belong to the image of
$U(\mathcal W_{1+\infty})$.  This is the classical statement that we
shall deform: the Jack content observables must first be reconstructed
from a commuting spectral hierarchy, after which Hall duality turns
them into the multiplication operators $\Delta_\mu(\alpha)$.

\section{Jack conventions and the Hall-adjoint reversal}
\label{sec:conventions}

We now recall the Goulden--Jackson deformation
\cite{GouldenJacksonJack,ThibonStable}.  The Jack scalar product is
\begin{equation}\label{eq:Jack-scalar-product}
 \langle p_\lambda,p_\mu\rangle_\alpha
 =\delta_{\lambda\mu}z_\lambda\alpha^{\ell(\lambda)}.
\end{equation}
Let $J_\lambda^{(\alpha)}$ be the integral Jack polynomial and write
$j_\lambda(\alpha)=\langle J_\lambda^{(\alpha)},
J_\lambda^{(\alpha)}\rangle_\alpha$.  For $\lambda\vdash n$, set
\begin{equation}\label{eq:GJ-kernel}
 \Phi_{\alpha,n}(X,Y,Z)=
 \sum_{\lambda\vdash n}
 \frac{J_\lambda^{(\alpha)}(X)J_\lambda^{(\alpha)}(Y)
 J_\lambda^{(\alpha)}(Z)}{j_\lambda(\alpha)}.
\end{equation}
The Jack connection coefficients $a_{\mu\nu}^{\rho}(\alpha)$ are defined
by
\begin{equation}\label{eq:GJ-connection-coefficients}
 \Phi_{\alpha,n}(X,Y,Z)
 =\sum_{\rho,\mu,\nu\vdash n}a_{\mu\nu}^{\rho}(\alpha)
 \frac{p_\rho(X)p_\mu(Y)p_\nu(Z)}
 {z_\rho\alpha^{\ell(\rho)}}.
\end{equation}

Equivalently, define a coproduct on $\Lambda_n$ by
\begin{equation}\label{eq:GJ-coproduct}
 \Gamma_\alpha(J_\lambda^{(\alpha)})
 =\frac1{n!}J_\lambda^{(\alpha)}\otimes J_\lambda^{(\alpha)}
\end{equation}
and define $\times_\alpha$ as its dual for the \emph{ordinary Hall}
scalar product:
\begin{equation}\label{eq:GJ-Hall-duality}
 \langle f\times_\alpha g,h\rangle_{\Hall}
 =\langle f\otimes g,\Gamma_\alpha(h)\rangle_{\Hall\otimes\Hall}.
\end{equation}
This choice deserves explanation.  Jack orthogonality is used in
\eqref{eq:GJ-kernel}, whereas Hall duality is used in
\eqref{eq:GJ-Hall-duality}.  It preserves the same power-sum
coordinates as in the class algebra and makes the specialization
$\alpha=1$ literally equal to $\times$.  Had we dualized with
\eqref{eq:Jack-scalar-product}, factors $\alpha^{\ell(\mu)}$ would be
inserted into the power-sum basis.

There is also a concrete reason for this convention.  At
$\alpha=1,2,1/2$, the same formal trace polynomial is evaluated by a
Gaussian integral over, respectively, complex, real, and quaternionic
matrix spaces.  Wick expansion produces the coefficients of the
coproduct in the unchanged power-sum coordinates; only the matrix space
and its covariance change.  Thus Hall duality keeps the three matrix
models in a common notation.  Detailed normalization checks for these
integrals are given in \cite{ThibonStable}.

The coproduct is cocommutative and coassociative, hence
$\times_\alpha$ is a commutative associative product.  If
$Q'_\lambda$ denotes the basis Hall-dual to the monic Jack basis
$P_\lambda^{(\alpha)}$, then
\begin{equation}\label{eq:GJ-idempotents}
 Q'_\lambda\times_\alpha Q'_\mu
 =\delta_{\lambda\mu}\frac{c_\lambda(\alpha)}{n!}Q'_\lambda.
\end{equation}
This gives, in particular, a direct proof of
associativity.

For the Jack deformation we put
\begin{equation}\label{eq:jack-heisenberg}
 a_{-k}=\alpha p_k,
 \qquad a_k=D_k,
 \qquad [a_k,a_{-\ell}]=\alpha k\delta_{k\ell}.
\end{equation}
Our operator $\Delta_\mu(\alpha)$ is, by definition, multiplication by
the stable series $\widehat p_\mu$ for the Goulden--Jackson product.
This convention is dual to the usual spectral convention.  We therefore
reserve $\cH_\mu(\alpha)$ for the Hamiltonian acting diagonally on
ordinary Jack functions and impose
\begin{equation}\label{eq:adjoint-convention}
  {\Delta_\mu(\alpha)
 =\cH_\mu(\alpha)^{\perp_{\Hall}}.}
\end{equation}

The remaining sections use this convention.

As a first normalization check, the explicit Goulden--Jackson operator
\cite{GouldenJacksonJack} gives
\begin{equation}\label{eq:stable-p2-Jack-example}
 \widehat p_2\times_\alpha\widehat p_2
 =\widehat p_{22}+4\widehat p_3
 +2\alpha\widehat p_{11}+2(\alpha-1)\widehat p_2.
\end{equation}
At $\alpha=1$ this reduces to
\eqref{eq:stable-p2-classical-example}; the last term is the first
contribution with no permutation-only interpretation.

\section{The spherical Cherednik framework}
\label{sec:ambient-algebra}

Before introducing Cherednik algebras, let us identify precisely the
algebra whose connection coefficients occur above.  For each $n$, put
\begin{equation}\label{eq:GJ-finite-algebra}
 \mathcal A_n^{(\alpha)}=(\Lambda_n,\times_\alpha),
 \qquad K_\mu=\frac{n!}{z_\mu}p_\mu\quad(\mu\vdash n).
\end{equation}
Then \eqref{eq:GJ-connection-coefficients} and
\eqref{eq:GJ-Hall-duality} give
\begin{equation}\label{eq:GJ-structure-algebra}
 K_\mu\times_\alpha K_\nu
 =\sum_{\rho\vdash n}a_{\mu\nu}^{\rho}(\alpha)K_\rho.
\end{equation}
Indeed, contracting \eqref{eq:GJ-kernel} with the Jack scalar product
in its third alphabet gives $n!\Gamma_\alpha$.  The symmetry of the
coefficient of $p_\rho(X)p_\mu(Y)p_\nu(Z)$, followed by Hall duality,
gives \eqref{eq:GJ-structure-algebra}.  Thus the
$a_{\mu\nu}^{\rho}(\alpha)$ are, literally, the structure constants of
the finite-dimensional commutative semisimple algebra
$\mathcal A_n^{(\alpha)}$ in the basis $K_\mu$.

For $\alpha=1$, this is the centre of $\mathbb C\mathfrak S_n$ under
Frobenius characteristic.  For $\alpha=2$, it is the zonal or matching
analogue associated with the Gelfand pair
$(\mathfrak S_{2n},H_n)$, where $H_n$ is the hyperoctahedral group.
For $\alpha=1/2$, the quaternionic zonal matrix model supplies the dual
classical specialization.  For a generic parameter,
$\mathcal A_n^{(\alpha)}$ is best regarded as the abstract
Goulden--Jackson algebra defined by
\eqref{eq:GJ-coproduct}--\eqref{eq:GJ-Hall-duality}.

%This algebra must not be confused with the spherical Cherednik algebra.
%The former is commutative and records the deformed class product; the
%latter is a larger noncommutative operator algebra in which its regular
%multiplication operators, together with Heisenberg creation and
%annihilation operators, can be realized.

We now specify the algebra in which the deformation takes place.  Let
$\mathsf H_N(\theta)$ denote the type-$A$ degenerate double affine
Hecke algebra in its polynomial representation, and let
\begin{equation}
 e_N=\frac1{N!}\sum_{w\in\mathfrak S_N}w
\end{equation}
be the symmetrizing idempotent.  The algebra acting on
$\Lambda_N$ is the spherical algebra
\begin{equation}\label{eq:spherical-ddaha}
 e_N\mathsf H_N(\theta)e_N.
\end{equation}
The commuting Cherednik elements generate a degenerate affine Hecke
subalgebra.  Symmetric polynomials in these elements are central in
that subalgebra; after spherical projection they give the
Sekiguchi--Debiard and Dunkl--Heckman Hamiltonians.  Thus it is useful
to think of the content operators as coming from the centre of the
degenerate affine Hecke algebra, but the full algebra containing both
the Hamiltonians and the creation and annihilation operators is
\eqref{eq:spherical-ddaha}, not that centre alone.

Schiffmann and Vasserot construct a stable, centrally extended limit of
the algebras \eqref{eq:spherical-ddaha}, denoted $\mathbf{SH}^c$
\cite{SchiffmannVasserotCherednik}.  Arbesfeld and Schiffmann give it a
presentation as a deformed $W_{1+\infty}$ algebra
\cite{ArbesfeldSchiffmann}.  The polynomial representations stabilize
to its rank-one Fock representation
\begin{equation}\label{eq:SHc-Fock}
 \mathcal F=\mathbb Q(\alpha)[p_1,p_2,\ldots],
\end{equation}
whose Heisenberg action is \eqref{eq:jack-heisenberg}.  The stable
Hamiltonians constructed below belong to the completed degree-zero
commutative subalgebra of this representation.

There is an equivalent Yangian language.  Up to an overall rescaling
and a permutation of parameters, our convention corresponds to
\begin{equation}\label{eq:Yangian-parameters}
 (h_1,h_2,h_3)=(\alpha,-1,1-\alpha),
 \qquad h_1+h_2+h_3=0,
\end{equation}
because the weight of a box $(i,j)$ is
$h_1(j-1)+h_2(i-1)=c_\alpha(i,j)$.  The algebra $\mathbf{SH}^c$ is
related, after the standard completion and parameter identification,
to the affine Yangian of $\mathfrak{gl}_1$; see
\cite{Tsymbaliuk}.  In this description the commuting content
Hamiltonians are Cartan modes, while multiplication by $p_1$ and its
adjoint give the first raising and lowering modes.

At $\alpha=1$ one has $h_3=0$.  The deformation then specializes to
the enveloping algebra of the central extension of differential
operators on the circle, namely $U(\mathcal W_{1+\infty})$.  Under
boson--fermion correspondence, \eqref{eq:SHc-Fock} is the charge-zero
sector of the level-one fermionic Fock representation of
$\widehat{\mathfrak{gl}}_\infty$, and $\mathcal W_{1+\infty}$ is the
differential-operator subalgebra used in \cite{LascouxThibon}.  For
generic $\alpha$, it is therefore more accurate to speak of the
$\mathbf{SH}^c$ or affine-Yangian Fock representation than of a new
representation of the undeformed $U(\widehat{\mathfrak{gl}}_\infty)$.

\section{Stable Dunkl--Heckman Hamiltonians}
\label{sec:stable-Dunkl}

We now construct the commuting operators that will replace the diagonal
content operators of the Schur case.  We first work in a fixed number of
variables.  The dependence on this number is then removed by a
translation and a vacuum normalization.

\subsection{Heckman--Polychronakos operators}

Let
\begin{equation}
 \Lambda_N=\mathbb Q(\theta)[x_1,\ldots,x_N]^{\mathfrak S_N},
 \qquad \theta=\alpha^{-1}.
\end{equation}
On the full polynomial ring, define the Dunkl operators
\begin{equation}\label{eq:Dunkl-finite}
 \mathcal D_i=\frac{\partial}{\partial x_i}
 +\theta\sum_{j\ne i}\frac{1-s_{ij}}{x_i-x_j},
 \qquad 1\leq i\leq N,
\end{equation}
where $s_{ij}$ exchanges $x_i$ and $x_j$.  The
Heckman--Polychronakos operators are
\begin{equation}\label{eq:HP-finite}
 \cP_m^{(N)}=\sum_{i=1}^N(x_i\mathcal D_i)^m,
 \qquad m\geq1,
\end{equation}
and we put $\cP_0^{(N)}=N\,\mathrm{Id}$.  Although the individual
operators $x_i\mathcal D_i$ do not commute, the sums
$\cP_m^{(N)}$ commute with one another and preserve $\Lambda_N$
\cite{SergeevVeselov,DunklGorin}.

Let $P_\lambda^{(\alpha)}(x_1,\ldots,x_N)$ denote a Jack polynomial,
where $\ell(\lambda)\leq N$.  Write
\begin{equation}\label{eq:ell-i}
 \ell_i=\lambda_i+\theta(N-i),
 \qquad 1\leq i\leq N,
\end{equation}
and let $\operatorname{eig}_m^{(N)}(\lambda)$ be the eigenvalue of
$\cP_m^{(N)}$ on $P_\lambda^{(\alpha)}$.  We shall use the following
form of the spectral formula of Dunkl and Gorin \cite{DunklGorin}:
\begin{equation}\label{eq:DG-generating-spectrum}
 1-\theta z\sum_{m\geq0}
 \operatorname{eig}_m^{(N)}(\lambda)z^m
 =\prod_{i=1}^N
 \frac{1-(\ell_i+\theta)z}{1-\ell_i z}.
\end{equation}
The convention $\operatorname{eig}_0^{(N)}=N$ makes the constant and
linear terms agree.

\subsection{Translation and vacuum normalization}

The coordinates \eqref{eq:ell-i} depend on $N$.  Define translated
operators
\begin{equation}\label{eq:Pbar-definition}
 \overline{\cP}_m^{(N)}
 =\sum_{r=0}^m\binom mr
 (-\theta N)^{m-r}\cP_r^{(N)}.
\end{equation}
This is the operator counterpart of translating every $\ell_i$ by
$-\theta N$.  Indeed, the matrix formula for the eigenvalues in
\cite{DunklGorin} shows that
\begin{equation}
 \overline{\operatorname{eig}}{}_m^{(N)}(\lambda)
 =\operatorname{eig}_m^{(N)}
   (\ell_1-\theta N,\ldots,\ell_N-\theta N).
\end{equation}

Consider the operator-valued series
\begin{equation}\label{eq:Gbar-definition}
 \overline G_N(z)
 =1-\theta z\sum_{m\geq0}
   \overline{\cP}_m^{(N)}z^m.
\end{equation}
Put
\begin{equation}\label{eq:A-coordinates}
 A_i(\lambda)=\lambda_i-\theta(i-1),
 \qquad A_i(0)=-\theta(i-1).
\end{equation}
Since
\begin{equation}
 \ell_i-\theta N=A_i(\lambda)-\theta,
 \qquad
 \ell_i+\theta-\theta N=A_i(\lambda),
\end{equation}
formula \eqref{eq:DG-generating-spectrum} gives
\begin{equation}\label{eq:Gbar-spectrum}
 \overline G_N(z)P_\lambda^{(\alpha)}
 =\prod_{i=1}^N
  \frac{1-A_i(\lambda)z}
       {1-(A_i(\lambda)-\theta)z}
  P_\lambda^{(\alpha)}.
\end{equation}
For the empty partition, this product telescopes:
\begin{equation}\label{eq:vacuum-telescope}
 \overline G_N(z)1
 =\prod_{i=1}^N
  \frac{1+\theta(i-1)z}{1+\theta i z}
 =\frac1{1+\theta Nz}.
\end{equation}
We therefore define the vacuum-normalized series
\begin{equation}\label{eq:PhiN-definition}
 \Phi_N(z)
 =\frac{\overline G_N(z)}{\overline G_N(z)1}
 =(1+\theta Nz)\overline G_N(z).
\end{equation}

\begin{proposition}[Stability]\label{prop:Phi-stability}
For every partition $\lambda$ with $\ell(\lambda)\leq N$, the
eigenvalue of $\Phi_N(z)$ on $P_\lambda^{(\alpha)}$ is
\begin{equation}\label{eq:Phi-stable-spectrum}
 \Phi_\lambda(z)=
 \prod_{i\geq1}
 \frac{1-A_i(\lambda)z}{1-(A_i(\lambda)-\theta)z}
 \frac{1-(A_i(0)-\theta)z}{1-A_i(0)z}.
\end{equation}
The product is finite and independent of $N$.  Consequently, the
operators $\Phi_N(z)$ are compatible with the specialization
$x_N=0$ and define an operator $\Phi(z)$ on $\Sym$.
\end{proposition}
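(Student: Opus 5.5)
The proof has three steps: combine the eigenvalue formula \eqref{eq:Gbar-spectrum} with the vacuum value \eqref{eq:vacuum-telescope}, show that the resulting finite product stabilizes, and then pass to the inverse limit. Since $P_\lambda^{(\alpha)}$ is an eigenvector of every $\overline{\cP}_m^{(N)}$, it is an eigenvector of $\overline G_N(z)$ with the eigenvalue given in \eqref{eq:Gbar-spectrum}. Multiplying by the scalar $1+\theta Nz$ and using \eqref{eq:vacuum-telescope} written as
\[
\frac{1}{1+\theta N z}=\prod_{i=1}^N\frac{1-A_i(0)z}{1-(A_i(0)-\theta)z},
\]
the eigenvalue of $\Phi_N(z)$ is
\[
\prod_{i=1}^N \frac{1-A_i(\lambda)z}{1-(A_i(\lambda)-\theta)z}\,
\frac{1-(A_i(0)-\theta)z}{1-A_i(0)z}.
\]
For this we only need $A_i(0)-\theta=-\theta i$ and $A_i(0)=-\theta(i-1)$, which are read off from \eqref{eq:A-coordinates}.

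Next, for $i>\ell(\lambda)$ we have $\lambda_i=0$, so $A_i(\lambda)=A_i(0)$ and the $i$-th factor equals $1$. The product over $i\le N$ is therefore the product over $i\le\ell(\lambda)$, which is finite. It does not depend on $N$, because the $A_i$ do not involve $N$; that was the purpose of the translation \eqref{eq:Pbar-definition}. This gives \eqref{eq:Phi-stable-spectrum}.

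For compatibility with $x_N=0$, let $\rho_N:\Lambda_N\to\Lambda_{N-1}$ be the restriction map. It sends $P_\lambda^{(\alpha)}(x_1,\dots,x_N)$ to $P_\lambda^{(\alpha)}(x_1,\dots,x_{N-1})$ when $\ell(\lambda)\le N-1$, and to $0$ when $\ell(\lambda)=N$; this is the standard stability of Jack polynomials. The Jack polynomials form a basis of $\Lambda_N$, and each is an eigenvector of $\Phi_N(z)$ with an eigenvalue independent of $N$. Hence $\rho_N\circ\Phi_N(z)=\Phi_{N-1}(z)\circ\rho_N$ coefficientwise in $z$; on the basis elements killed by $\rho_N$ both sides vanish. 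The operators therefore pass to the inverse limit $\Sym=\varprojlim\Lambda_N$, taken degreewise, and define $\Phi(z)$, which acts diagonally on $P_\lambda^{(\alpha)}$ with eigenvalue $\Phi_\lambda(z)$.

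The main point to watch is the formal-series bookkeeping. The operator $\overline G_N(z)$ is a series in $z$ with operator coefficients, and $(1+\theta Nz)\overline G_N(z)$ must be read coefficientwise, with each $z^m$-coefficient an $N$-dependent combination of the $\cP_r^{(N)}$. The identity of eigenvalues holds coefficient by coefficient as rational functions expanded at $z=0$. Beyond that, the only input is the Dunkl--Gorin spectrum as translated in \eqref{eq:Gbar-spectrum}, which I take as given.
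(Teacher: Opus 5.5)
Your proposal is correct and follows the paper's proof essentially step by step. You divide the spectrum \eqref{eq:Gbar-spectrum} by its vacuum value, note that the factors with $i>\ell(\lambda)$ are trivial because $A_i(\lambda)=A_i(0)$ and the $A_i$ do not involve $N$, and deduce compatibility with $x_N=0$ from Jack stability and the fact that the $P_\lambda^{(\alpha)}$ form a basis. You also treat explicitly the partitions with $\ell(\lambda)=N$: they are killed by the restriction, so both sides of $\rho_N\circ\Phi_N(z)=\Phi_{N-1}(z)\circ\rho_N$ vanish on them, a point the paper leaves implicit.
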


\begin{proof}
Divide \eqref{eq:Gbar-spectrum} by its value at the empty partition.
This gives the product in \eqref{eq:Phi-stable-spectrum} truncated at
$i=N$.  If $i>\ell(\lambda)$, then $A_i(\lambda)=A_i(0)$, so that the
corresponding factor is one.  The product is therefore unchanged when
$N$ is increased.

The Jack stability relation
\begin{equation}
 P_\lambda^{(\alpha)}(x_1,\ldots,x_{N-1},0)
 =P_\lambda^{(\alpha)}(x_1,\ldots,x_{N-1})
\end{equation}
then proves the compatibility of the operators, since the Jack
polynomials form a basis.
\end{proof}

\subsection{Logarithmic Hamiltonians}

The coefficients of $\Phi(z)$ commute and its constant term is the
identity.  We may thus define commuting stable operators $\cQ_r$ by
\begin{equation}\label{eq:Q-log-definition}
 -\log\Phi(z)=\sum_{r\geq1}\frac{\cQ_r}{r}z^r.
\end{equation}
The vacuum normalization implies $\cQ_r1=0$.  Taking the logarithm of
\eqref{eq:Phi-stable-spectrum} gives the following explicit spectrum.

\begin{theorem}\label{thm:Q-spectrum}
The Jack polynomials are joint eigenfunctions of the operators
$\cQ_r$.  Their eigenvalues are
\begin{equation}\label{eq:q-r-spectrum}
 q_r(\lambda)=\sum_{i\geq1}
 \left[
 A_i(\lambda)^r-(A_i(\lambda)-\theta)^r
 -A_i(0)^r+(A_i(0)-\theta)^r
 \right].
\end{equation}
In particular, $q_1=0$, and the right-hand side is a shifted symmetric
function of $\lambda$.
\end{theorem}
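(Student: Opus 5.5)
The plan is to take the logarithm of the eigenvalue formula in Proposition~\ref{prop:Phi-stability} factor by factor. By that proposition, $\Phi(z)$ acts on $P_\lambda^{(\alpha)}$ by the scalar series $\Phi_\lambda(z)$. Its coefficients are polynomials in the operators $\overline{\cP}_m^{(N)}$, and these are simultaneously diagonal on the Jack basis. Hence every coefficient of $-\log\Phi(z)$ is a polynomial in commuting operators diagonal on the Jack basis. The log series makes sense because $\Phi(z)$ has constant term the identity, and in each degree only finitely many terms act. So $\cQ_r$ is diagonal on $P_\lambda^{(\alpha)}$, with eigenvalue $r$ times the $z^r$ coefficient of $-\log\Phi_\lambda(z)$. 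The formal log of an operator series that is diagonal in a fixed basis is just the log of the scalar series on each eigenvector.

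Next I compute this coefficient. Using $-\log(1-az)=\sum_{r\ge1}a^rz^r/r$, the factor
\[
\frac{1-A z}{1-(A-\theta)z}
\]
contributes $-\log(1-Az)+\log(1-(A-\theta)z)=\sum_r \bigl(A^r-(A-\theta)^r\bigr)z^r/r$ to $-\log\Phi_\lambda$. The vacuum factor contributes the same expression with $A_i(0)$ and the opposite sign. Summing over $i$ gives exactly \eqref{eq:q-r-spectrum}. The product in Proposition~\ref{prop:Phi-stability} is finite: factors with $i>\ell(\lambda)$ are $1$ because $A_i(\lambda)=A_i(0)$. So the sum is finite, and the logarithm of the product equals the sum of the logarithms.

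For $q_1=0$: each bracket with $r=1$ is $\theta-\theta=0$.

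Shifted symmetry is the one point needing some care. The natural shifted variables are $\lambda_i-\theta i$, or equivalently $A_i(\lambda)=\lambda_i-\theta(i-1)$. Put $f(x)=x^r-(x-\theta)^r$. Then
\[
q_r(\lambda)=\sum_i\bigl(f(A_i(\lambda))-f(A_i(0))\bigr).
\]
This has the form $\sum_i \bigl(g(\lambda_i-\theta i)-g(-\theta i)\bigr)$ with $g(x)=f(x+\theta)$ a polynomial. Such sums are the standard generators of the algebra of $\theta$-shifted symmetric functions of Okounkov--Olshanski. Concretely, it suffices to note that $\sum_i\bigl((\lambda_i-\theta i)^k-(-\theta i)^k\bigr)$ is $\theta$-shifted symmetric, i.e.\ symmetric in $\lambda_i-\theta i$, and stable under $\lambda_{N+1}=0$. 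Then expand $g$ in powers. The stability comes from the vanishing of terms with $\lambda_i=0$.

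I expect no real obstacle. The only step requiring justification is passing from operator logarithms to eigenvalue logarithms, which follows from simultaneous diagonalizability on the Jack basis in each homogeneous component.
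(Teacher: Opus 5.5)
Your proposal is correct and follows the paper's proof. Like the paper, you expand $-\log$ of each of the four factors of $\Phi_\lambda(z)$ using $-\log(1-uz)=\sum_{r\geq1}u^rz^r/r$, with finiteness coming from $A_i(\lambda)=A_i(0)$ for $i>\ell(\lambda)$; your extra checks (passing from the operator logarithm to the scalar one by simultaneous diagonalization, $q_1=0$, and shifted symmetry via $g(x)=f(x+\theta)$) fill in points the paper leaves implicit.
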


\begin{proof}
Use
\begin{equation}
 -\log(1-uz)=\sum_{r\geq1}\frac{u^r}{r}z^r
\end{equation}
in each of the four factors in
\eqref{eq:Phi-stable-spectrum}.  Only the indices
$i\leq\ell(\lambda)$ contribute, so all coefficient extractions are
finite.
\end{proof}

For later use, introduce the shifted row power sums
\begin{equation}\label{eq:S-r-definition}
 S_r(\lambda)=\sum_{i\geq1}
 \bigl(A_i(\lambda)^r-A_i(0)^r\bigr).
\end{equation}
Expanding the finite difference in \eqref{eq:q-r-spectrum}, we obtain
\begin{equation}\label{eq:q-vs-S-general}
 q_r=\sum_{j=1}^{r-1}(-1)^{j+1}
 \binom rj\theta^jS_{r-j}.
\end{equation}
The first cases are
\begin{align}
 q_2&=2\theta S_1,\label{eq:q2-S}\\
 q_3&=3\theta S_2-3\theta^2S_1,\label{eq:q3-S}\\
 q_4&=4\theta S_3-6\theta^2S_2
       +4\theta^3S_1.\label{eq:q4-S}
\end{align}
Thus the $\cQ_r$ generate the same shifted-symmetric spectral algebra
as the $S_r$.

The first nonzero operators can already be written intrinsically on
$\Sym$, without introducing a finite number of variables.  With
$E=\sum_{k\geq1}p_kD_k$, one has
\begin{equation}\label{eq:Q2-stable-explicit}
  {\cQ_2=\frac{2}{\alpha}E}
\end{equation}
and
\begin{equation}\label{eq:Q3-stable-explicit}
  {
 \cQ_3=\frac{3}{\alpha^2}\left(
  \sum_{i,j\geq1}p_ip_jD_{i+j}
  +\alpha\sum_{i,j\geq1}p_{i+j}D_iD_j
  +(\alpha-1)\sum_{k\geq1}k p_kD_k
 \right).}
\end{equation}
Both sums are locally finite on $\Sym$.  Formula
\eqref{eq:Q2-stable-explicit} follows from $q_2=2\theta|\lambda|$.
Formula \eqref{eq:Q3-stable-explicit} follows by expanding the
vacuum-normalized generating series \eqref{eq:PhiN-definition};
equivalently, one substitutes the collective-variable expression for
$\cP_2^{(N)}$ into \eqref{eq:Q3-P}.  The terms depending on $N$ cancel.
These formulas also provide direct normalization checks for the
logarithmic definition \eqref{eq:Q-log-definition}.

\subsection{An exponential generating series}

The logarithmic Hamiltonians admit the exponential generating series
\begin{equation}\label{eq:Q-exponential-series}
 \cQ^{\exp}(u)=
 \sum_{r\geq1}\cQ_r\frac{u^r}{r!}.
\end{equation}
By \eqref{eq:q-r-spectrum}, its eigenvalue is
\begin{equation}\label{eq:Qexp-spectrum}
 Q_\lambda^{\exp}(u)
 =(1-e^{-\theta u})
 \sum_{i\geq1}
 \left(e^{uA_i(\lambda)}-e^{uA_i(0)}\right).
\end{equation}

For a box $\square=(i,j)$, put
\begin{equation}\label{eq:alpha-content}
 c_\alpha(\square)=\alpha(j-1)-(i-1),
\end{equation}
and define
\begin{equation}\label{eq:content-moment}
 C_r^{(\alpha)}(\lambda)
 =\sum_{\square\in\lambda}c_\alpha(\square)^r,
 \qquad r\geq0.
\end{equation}
Thus $C_0^{(\alpha)}(\lambda)=|\lambda|$.  We denote by
$\mathbf C_r^{(\alpha)}$ the unique diagonal operator on $\Sym$ such
that
\begin{equation}\label{eq:content-operator-definition}
 \mathbf C_r^{(\alpha)}P_\lambda^{(\alpha)}
 =C_r^{(\alpha)}(\lambda)P_\lambda^{(\alpha)}.
\end{equation}
It is stable because its eigenvalue will be expressed below as a
polynomial in the stable functions $q_2,\ldots,q_{r+2}$.
Define its exponential generating series by
\begin{equation}\label{eq:C-exponential-definition}
 \mathbf C^{(\alpha)}(v)
 =\sum_{r\geq0}\mathbf C_r^{(\alpha)}
   \frac{v^r}{r!}.
\end{equation}

\begin{proposition}\label{prop:content-generating-series}
The stable content Hamiltonians have the generating series
\begin{equation}\label{eq:content-generating-series}
  {
 \mathbf C^{(\alpha)}(v)
 =\frac{\cQ^{\exp}(\alpha v)}
 {(1-e^{-v})(e^{\alpha v}-1)}.}
\end{equation}
Its eigenvalue on $P_\lambda^{(\alpha)}$ is
\begin{equation}\label{eq:Cexp-spectrum}
 C_\lambda^{(\alpha)}(v)
 =\sum_{\square\in\lambda}e^{v c_\alpha(\square)}.
\end{equation}
Although the denominator in \eqref{eq:content-generating-series}
vanishes to order two, the numerator does also, since $\cQ_1=0$ and
$\cQ_2=2\theta E$.
\end{proposition}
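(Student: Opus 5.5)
Since all operators involved are diagonal in the Jack basis, it suffices to verify the identity of eigenvalues on each $P_\lambda^{(\alpha)}$, i.e. to show
\begin{equation*}
 (1-e^{-v})(e^{\alpha v}-1)\sum_{\square\in\lambda}e^{vc_\alpha(\square)}
 =Q^{\exp}_\lambda(\alpha v),
\end{equation*}
with $Q^{\exp}_\lambda$ given by \eqref{eq:Qexp-spectrum}. Note that $\theta\cdot\alpha v=v$, so $Q^{\exp}_\lambda(\alpha v)=(1-e^{-v})\sum_{i\geq1}\bigl(e^{\alpha vA_i(\lambda)}-e^{\alpha vA_i(0)}\bigr)$. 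The factor $1-e^{-v}$ cancels on both sides, and we are reduced to a row-by-row geometric sum.

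For the main step, fix a row $i\leq\ell(\lambda)$. The boxes $(i,j)$, $1\leq j\leq\lambda_i$, have $c_\alpha=\alpha(j-1)-(i-1)$, so
\begin{equation*}
 (e^{\alpha v}-1)\sum_{j=1}^{\lambda_i}e^{v(\alpha(j-1)-(i-1))}
 =e^{-v(i-1)}\bigl(e^{\alpha v\lambda_i}-1\bigr)
\end{equation*}
by telescoping. On the other side, $\alpha A_i(\lambda)=\alpha\lambda_i-(i-1)$ and $\alpha A_i(0)=-(i-1)$, so $e^{\alpha vA_i(\lambda)}-e^{\alpha vA_i(0)}=e^{-v(i-1)}(e^{\alpha v\lambda_i}-1)$, which matches. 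Rows with $i>\ell(\lambda)$ contribute zero on both sides, so summing over $i$ gives the identity of eigenvalues, hence \eqref{eq:content-generating-series} and \eqref{eq:Cexp-spectrum} together.

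It remains to justify the division by $(1-e^{-v})(e^{\alpha v}-1)=\alpha v^2+O(v^3)$ at the operator level, so that the coefficients $\mathbf C_r^{(\alpha)}$ are honest polynomials in the $\cQ_s$. Using $\cQ_1=0$ and $\cQ_2=2\theta E$, the series $\cQ^{\exp}(\alpha v)=\sum_{r\geq2}\cQ_r\alpha^rv^r/r!$ starts at order $v^2$ with coefficient $\alpha^2\cdot\theta E=\alpha E$. Dividing, the constant term is $E$, which is consistent with $C_0^{(\alpha)}=|\lambda|$. Each $\mathbf C_r^{(\alpha)}$ is then a finite linear combination of $\cQ_2,\ldots,\cQ_{r+2}$, with Bernoulli-type coefficients, which is the stability claim made before the statement.

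This bookkeeping of the double zero is the only mildly delicate point. The rest is a direct computation, and the whole argument rests on the identity $\theta\alpha=1$ aligning the shift $-\theta$ in \eqref{eq:Qexp-spectrum} with the row step $-1$ in $c_\alpha$.
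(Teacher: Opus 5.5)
Your proposal is correct and follows essentially the same route as the paper. Both arguments sum the geometric progression along each row, identify the result with $e^{\alpha vA_i(\lambda)}-e^{\alpha vA_i(0)}$, and substitute $u=\alpha v$ in the exponential spectrum using $\theta\alpha=1$. Your treatment of the double zero ($\alpha v^2$ leading term, constant coefficient $E$, finite triangular dependence on $\cQ_2,\ldots,\cQ_{r+2}$) is a correct elaboration of the paper's closing remark.
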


\begin{proof}
Summing the geometric progression in each row gives
\begin{align}
 \sum_{\square\in\lambda}e^{v c_\alpha(\square)}
 &=\sum_{i\geq1}\sum_{j=0}^{\lambda_i-1}
   e^{v(\alpha j-(i-1))}\notag\\
 &=\frac1{e^{\alpha v}-1}
   \sum_{i\geq1}
   \left(e^{\alpha vA_i(\lambda)}
        -e^{\alpha vA_i(0)}\right).
 \label{eq:row-geometric-series}
\end{align}
Taking $u=\alpha v$ in \eqref{eq:Qexp-spectrum}, and using
$\theta\alpha=1$, proves the result.
\end{proof}

Subtracting the constant content moment gives the series
\begin{equation}\label{eq:content-zero-mode-series}
 \mathscr Z_0^{(\alpha)}(v)
 =\mathbf C^{(\alpha)}(v)-E
 =\sum_{r\geq1}\mathbf C_r^{(\alpha)}\frac{v^r}{r!}.
\end{equation}
whose eigenvalue is
$\sum_{\square\in\lambda}(e^{vc_\alpha(\square)}-1)$.
Thus \eqref{eq:content-generating-series} constructs all content
Hamiltonians directly from the stable logarithmic series.  Its Hall adjoint
\begin{equation}\label{eq:content-zero-mode-adjoint}
 \mathscr Z_{0,\times}^{(\alpha)}(v)
 =\left(\mathscr Z_0^{(\alpha)}(v)\right)^{\perp_{\Hall}}
\end{equation}
is the corresponding series in the multiplication convention.

It is also useful to record the first operators before taking the
stable limit.  Writing $\cP_m=\cP_m^{(N)}$, direct expansion of
\eqref{eq:PhiN-definition} and \eqref{eq:Q-log-definition} yields
\begin{align}
 \cQ_2^{(N)}&=2\theta\cP_1,
 \label{eq:Q2-P}\\
 \cQ_3^{(N)}&=3\theta
 \bigl(\cP_2-\theta N\cP_1\bigr),
 \label{eq:Q3-P}\\
 \cQ_4^{(N)}&=4\theta
 \bigl(\cP_3-2\theta N\cP_2
       +\theta^2N^2\cP_1\bigr)
       +2\theta^2\cP_1^2.
 \label{eq:Q4-P}
\end{align}
Although $N$ occurs on the right-hand sides, the restrictions of these
combinations to symmetric polynomials are stable by
\cref{prop:Phi-stability}.  Formulas \eqref{eq:Q2-P}--\eqref{eq:Q4-P}
will allow us to compare the content operators both with the
Calogero--Sutherland hierarchy and with the differential operators of
Nazarov and Sklyanin.
\subsection{Normal ordering with the Sergeev--Veselov operator}
\label{sec:SV-normal-ordering}

There is a second construction of the trigonometric CMS integrals which
is particularly well suited to explicit normal ordering.  We describe it
carefully, since from the third integral on it must not be confused with
the Heckman--Polychronakos construction above.

Put $\overline\Lambda=\Lambda[p_0]$ and adjoin an auxiliary variable
$x$.  Following Sergeev and Veselov, define
\begin{equation}\label{eq:SV-Dunkl-infinity}
 \mathscr D_k=\partial-\frac{k}{2}\mathscr A
 \quad\hbox{on }\overline\Lambda[x],
\end{equation}
where $\partial$ is the derivation determined by
\begin{equation}\label{eq:SV-partial}
 \partial(x)=x,
 \qquad
 \partial(p_a)=a x^a \quad(a\geq1),
\end{equation}
and $\mathscr A$ is the $\overline\Lambda$-linear operator determined by
\begin{align}
 \mathscr A(1)&=0,\notag\\
 \mathscr A(x^\ell)
 &=(p_0-2\ell)x^\ell+p_\ell
   +2\sum_{a=1}^{\ell-1}p_a x^{\ell-a}
   \qquad(\ell\geq1).
 \label{eq:SV-A}
\end{align}
Finally let
\begin{equation}\label{eq:SV-evaluation}
 \mathsf E(x^\ell f)=p_\ell f,
 \qquad p_0\text{ being left as a formal parameter},
\end{equation}
and set
\begin{equation}\label{eq:SV-integrals}
 \mathscr L_r(k,p_0)=\mathsf E\mathscr D_k^r\big|_{\overline\Lambda}.
\end{equation}
The specialization $p_0=N$ gives the finite-dimensional trigonometric
CMS integrals of \cite[Section~5]{SergeevVeselov}.

For later verification, we spell out the recursion implicit in these
definitions.  If $F\in\overline\Lambda$ and $\ell\geq1$, then
\begin{align}
 \mathscr D_k(x^\ell F)
={}&x^\ell\sum_{a\geq1}x^aD_aF
 +\left((1+k)\ell-\frac{k p_0}{2}\right)x^\ell F
 \notag\\
&-\frac{k}{2}p_\ell F
-k\sum_{a=1}^{\ell-1}p_a x^{\ell-a}F,
\label{eq:SV-recursion}
\end{align}
For $\ell=0$, one has simply
$\mathscr D_kF=\sum_{a\geq1}x^aD_aF$.  Notice that the first term in
\eqref{eq:SV-recursion} differentiates every $p_a$ occurring in $F$.  Thus
\eqref{eq:SV-recursion} is already a normal-ordering algorithm and no
Leibniz rule may be applied to $\mathscr A$.

For $f\in\overline\Lambda$, the first two iterations give
\begin{align}
 \mathscr D_k f
 &=\sum_{a\geq1}x^aD_af,
 \label{eq:SV-first-iterate}\\
 \mathscr D_k^2 f
={}&\sum_{a,b\geq1}x^{a+b}D_aD_bf
 +\sum_{a\geq1}
   \left((1+k)a-\frac{k p_0}{2}\right)x^aD_af
 \notag\\
&-\frac{k}{2}\sum_{a\geq1}p_aD_af
 -k\sum_{r,s\geq1}x^s p_rD_{r+s}f.
\label{eq:SV-second-iterate}
\end{align}
After applying $\mathsf E$, this becomes
\begin{align}
 \mathscr L_1&=E,\label{eq:SV-L1}\\
 \mathscr L_2
 &={}\sum_{a,b\geq1}p_{a+b}D_aD_b
 -k\sum_{a,b\geq1}p_ap_bD_{a+b}
 \notag\\
 &\quad +(1+k)\sum_{a\geq1}a p_aD_a-kp_0E.
\label{eq:SV-L2}
\end{align}
Consequently, for $k=-\theta$ and $p_0=N$,
\begin{equation}\label{eq:SV-P2-identification}
 \mathscr L_2(-\theta,N)=\cP_2^{(N)}.
\end{equation}
This equality at order two is the source of a possible confusion: it no
longer holds without correction at order three.

\begin{lemma}\label{lem:HP-SV-cubic}
For $k=-\theta$ and $p_0=N$, the restrictions to symmetric
polynomials satisfy
\begin{equation}\label{eq:HP-SV-cubic}
  {
 \cP_3^{(N)}
 =\mathscr L_3(-\theta,N)
  +\frac{\theta N}{2}\mathscr L_2(-\theta,N)
  -\frac{\theta}{2}E^2.}
\end{equation}
\end{lemma}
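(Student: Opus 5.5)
Both sides of \eqref{eq:HP-SV-cubic} preserve $\Lambda_N$. The cleanest proof is therefore spectral: show that both sides act diagonally on the Jack basis $P_\lambda^{(\alpha)}(x_1,\dots,x_N)$ with the same eigenvalues. The left-hand eigenvalue is known from the Dunkl--Gorin formula \eqref{eq:DG-generating-spectrum}. Expanding the product to order $z^4$ expresses $\operatorname{eig}_3^{(N)}(\lambda)$ as a polynomial in the power sums $\sum_i\ell_i^r$, $r\le3$, and $\theta$. On the right, $E$ acts by $|\lambda|$, and $\mathscr L_2(-\theta,N)=\cP_2^{(N)}$ by \eqref{eq:SV-P2-identification}, with eigenvalue $\operatorname{eig}_2^{(N)}(\lambda)$. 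So the content of the lemma is the spectrum of $\mathscr L_3(-\theta,N)$.

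The plan has three steps.

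\emph{Step 1.} Record the eigenvalues of the Sergeev--Veselov integrals. Sergeev and Veselov show that the $\mathscr L_r(k,p_0)$ commute, are diagonal on Jack functions, and for $p_0=N$ have eigenvalues given by a generating function in the shifted coordinates $\ell_i$ (their deformed Newton sums; see \cite[Section~5]{SergeevVeselov}). I will extract the $r=3$ eigenvalue from that formula.

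\emph{Step 2.} If the citation does not give an explicit cubic formula, compute $\mathscr L_3$ directly. Apply the recursion \eqref{eq:SV-recursion} once more to \eqref{eq:SV-second-iterate} and then apply $\mathsf E$. This gives $\mathscr L_3$ as a normally ordered operator in $p_a$ and $D_a$, with terms of type $ppp DDD$-free cubic shape $p_{a+b+c}DDD$, $ppD$, $pDD$, and so on. Its eigenvalue then follows by triangularity: act on $m_\lambda$ and read off the leading coefficient, since these operators are triangular in dominance order. Equivalently, since every operator involved is stable up to polynomial dependence on $N$, it suffices to compare the coefficients of the normally ordered forms.

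\emph{Step 3.} Compare the two sides. Write $\cP_3^{(N)}$ in collective variables: either by the same leading-term computation from \eqref{eq:DG-generating-spectrum}, or by directly expanding $\sum_i(x_i\mathcal D_i)^3$ on symmetric functions. Then check that the difference $\cP_3-\mathscr L_3$ equals $\tfrac{\theta N}{2}\cP_2-\tfrac{\theta}{2}E^2$. Spectrally, this is an identity of symmetric polynomials in $\ell_1,\dots,\ell_N$. The $E^2$ term reflects the square $(\sum_i\ell_i-\text{const})^2$ that appears when one expands the rational generating function \eqref{eq:DG-generating-spectrum} to third order; this matches the quadratic term $2\theta^2\cP_1^2$ already visible in \eqref{eq:Q4-P}.

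The main obstacle is Step 2/3: matching conventions. The Sergeev--Veselov spectral parametrization uses its own shift of the $\ell_i$ and its own normalization at $p_0$, and the recursion \eqref{eq:SV-recursion} must be iterated without any Leibniz rule for $\mathscr A$. To keep this under control, I will first test the identity on the vacuum $1$ (both sides give $0$) and on $p_1$ and $p_2$, $p_1^2$. Together these determine the coefficients of $N\cP_2$ and $E^2$ in any relation of the assumed form. The general case then follows from the polynomial eigenvalue comparison.
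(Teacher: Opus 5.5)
\textbf{There is a genuine gap: the proposal reduces the lemma to an equivalent unproved statement and stops.} Given \eqref{eq:DG-generating-spectrum} and \eqref{eq:SV-P2-identification}, the lemma is exactly the claim that $\mathscr L_3(-\theta,N)$ acts on $P_\lambda^{(\alpha)}$ by $\operatorname{eig}_3^{(N)}(\lambda)-\frac{\theta N}{2}\operatorname{eig}_2^{(N)}(\lambda)+\frac{\theta}{2}|\lambda|^2$. Nothing in your plan produces this eigenvalue.

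\begin{itemize}
\item \emph{Step 1.} It appeals to a formula from \cite{SergeevVeselov} that you never state, and whose conventions you yourself doubt.
\item \emph{Step 2.} It does not work as described. The normally ordered blocks are not individually triangular on $m_\lambda$ in the Jack direction. For instance, $\sum_{a,b}p_{a+b}D_aD_b$ sends $m_{11}$ to $m_2$. Only combinations such as $\sum_i(x_i\partial_i)^2=\sum_{a,b}p_{a+b}D_aD_b+\sum_a a\,p_aD_a$ are triangular, so diagonal entries cannot be read off without rewriting everything in the $x_i$.
\item \emph{Comparing normal forms instead.} This needs the normal form of $\cP_3^{(N)}$, which no eigenvalue computation supplies. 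Getting it by expanding $\sum_i(x_i\mathcal D_i)^3$ is precisely the paper's proof, which you mention but do not carry out.
\item \emph{The low-degree tests.} Checking on $1,p_1,p_2,p_1^2$ only fixes $a,b$ in an ansatz $\cP_3^{(N)}-\mathscr L_3=aN\cP_2^{(N)}+bE^2$. The validity of that ansatz is the lemma itself.
\end{itemize}

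\textbf{A way to close the gap: identify $\mathscr D_{-\theta}$ at $p_0=N$ with a Dunkl--Heckman operator.} One has
\[
\sum_{j\neq i}\frac{x_i+x_j}{x_i-x_j}(x_i^\ell-x_j^\ell)=(N-2\ell)x_i^\ell+p_\ell+2\sum_{a=1}^{\ell-1}p_ax_i^{\ell-a}.
\]
So \eqref{eq:SV-A} is the Dunkl--Heckman operator in finitely many variables. Using $\frac{x_i+x_j}{x_i-x_j}=\frac{2x_i}{x_i-x_j}-1$, one gets on $\Lambda_N$
\[
\mathscr L_r(-\theta,N)=\sum_i\bigl(T_i-\tfrac\theta2\sigma_i\bigr)^r,
\qquad T_i=x_i\mathcal D_i,\quad \sigma_i=\sum_{j\neq i}(1-s_{ij}).
\]
For symmetric $f$, put $g_i=T_if$ and $h_i=T_i^2f$. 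Then $\sigma_if=0$. By $\mathfrak S_N$-equivariance of $i\mapsto T_i$, we have $\sigma_ig_i=Ng_i-Ef$ and $\sigma_ih_i=Nh_i-\sum_jh_j$. Hence
\begin{align*}
 \sum_i\bigl(T_i-\tfrac\theta2\sigma_i\bigr)^3f
 &=\cP_3^{(N)}f-\tfrac\theta2\sum_i\bigl(T_i\sigma_ig_i+\sigma_ih_i\bigr)
   +\tfrac{\theta^2}{4}\sum_i\sigma_i^2g_i\\
 &=\cP_3^{(N)}f-\tfrac\theta2\bigl(N\cP_2^{(N)}-E^2\bigr)f.
\end{align*}
The second line uses three facts: $\sum_i\sigma_ih_i=0$, $\sum_i\sigma_i^2g_i=N\sum_i\sigma_ig_i=0$, and $\sum_iT_i\sigma_ig_i=\sum_iT_i(Ng_i-Ef)=N\cP_2^{(N)}f-E^2f$.

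This is the lemma, with no spectral input. The same computation at order two gives \eqref{eq:SV-P2-identification}. Compared with the paper's block-by-block normal ordering in $p_a,D_a$, it shows that both correction terms come from the single term $T_i\sigma_iT_i$. It also delivers, as a corollary, the spectrum of $\mathscr L_3(-\theta,N)$ that your plan needed as an input.
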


\begin{proof}
Both sides may be normally ordered in the operators $p_a,D_a$.
For the right-hand side, apply \eqref{eq:SV-recursion} once more to
the four summands of \eqref{eq:SV-second-iterate}, remembering that
$D_a(p_b)=a\delta_{ab}$.  For the left-hand side, expand
$\sum_i(x_i\mathcal D_i)^3$ from \eqref{eq:intro-HP}, restrict to
symmetric polynomials, and pair the terms indexed by $(i,j)$ and
$(j,i)$.  The cubic and quadratic differential blocks agree.  The
remaining contractions are respectively
\begin{equation}
 \frac{\theta N}{2}\mathscr L_2(-\theta,N)
 \quad\text{and}\quad
 -\frac{\theta}{2}
 \left(
   \sum_{a,b\geq1}p_ap_bD_aD_b
   +\sum_{a\geq1}a p_aD_a
 \right).
\end{equation}
The expression in parentheses is $E^2$, which proves
\eqref{eq:HP-SV-cubic}.  Alternatively, the equality can be checked
mechanically from \eqref{eq:SV-recursion}; this is the form used in the
supplementary verification program.
\end{proof}

Substitution in \eqref{eq:Q4-P} removes the apparently extraneous
$E^2$ term and gives the useful stable formula.

\begin{proposition}\label{prop:Q4-SV}
With $p_0=N$,
\begin{equation}\label{eq:Q4-SV}
  {
 \cQ_4
 =4\theta\mathscr L_3(-\theta,p_0)
  -6\theta^2p_0\mathscr L_2(-\theta,p_0)
  +4\theta^3p_0^2E.}
\end{equation}
After normal ordering, the right-hand side is independent of $p_0$.
\end{proposition}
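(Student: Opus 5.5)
Substitute \cref{lem:HP-SV-cubic} into \eqref{eq:Q4-P}, together with the identities \eqref{eq:SV-P2-identification} and $\cP_1^{(N)}=E$. The latter is the case $m=1$ of \eqref{eq:HP-finite} on symmetric functions, and also follows from $\mathscr L_1=E$ in \eqref{eq:SV-L1}. With $k=-\theta$ and $p_0=N$, \eqref{eq:Q4-P} reads
\begin{equation*}
 \cQ_4^{(N)}=4\theta\cP_3-8\theta^2N\cP_2+4\theta^3N^2E+2\theta^2E^2 .
\end{equation*}
Replacing $\cP_3$ by $\mathscr L_3+\tfrac{\theta N}{2}\mathscr L_2-\tfrac{\theta}{2}E^2$ produces $-2\theta^2E^2$. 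This cancels the term $+2\theta^2\cP_1^2=2\theta^2E^2$, and the $\mathscr L_2$ coefficient becomes $2\theta^2N-8\theta^2N=-6\theta^2N$. This is exactly \eqref{eq:Q4-SV} at $p_0=N$, as an identity of operators on $\Lambda_N$.

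Next I pass to the stable limit. By \cref{prop:Phi-stability}, the left-hand side $\cQ_4^{(N)}$ is compatible with $x_N=0$ and defines $\cQ_4$ on $\Sym$, which is independent of $N$. The right-hand side is, by the recursion \eqref{eq:SV-recursion}, a normally ordered differential operator in the $p_a,D_a$ whose coefficients are polynomials in $p_0$, at most quadratic. I will write
\begin{equation*}
 R(p_0)=R_0+p_0R_1+p_0^2R_2 ,
\end{equation*}
with each $R_j$ a normally ordered operator on $\Sym$ independent of $N$.

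Independence of $p_0$ then follows by a standard argument. Fix a degree $d$ and a partition $\lambda\vdash d$. For every $N\geq d$, the symmetric-function operators $R(N)$ and $\cQ_4$ agree on $\Lambda_N$, and the restriction from $\Sym_d$ to $\Lambda_N$ in degree $d$ is injective. Hence $R(N)$ acts on $\Sym_d$ as $\cQ_4$ for infinitely many $N$. A polynomial in $N$ that is constant for all large $N$ is constant, so $R_1$ and $R_2$ vanish on $\Sym_d$ for every $d$. Therefore $R(p_0)=R_0=\cQ_4$ identically.

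The delicate step is justifying that the collective-variable operators $\mathscr L_r(-\theta,N)$ agree with their finite-$N$ avatars on $\Lambda_N$ uniformly in $N$. That is, the normal-ordered formula must represent the same operator after restriction for every $N$, including the truncation issues when $p_a$ with $a>N$ become dependent. I would handle this as the paper does for \eqref{eq:SV-P2-identification}: the Sergeev--Veselov operators are defined on $\Lambda$ and descend to $\Lambda_N$ at $p_0=N$ \cite{SergeevVeselov}, and \cref{lem:HP-SV-cubic} is already stated at that level.

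As a cross-check, one can verify the vanishing of $R_1$ and $R_2$ directly. Apply \eqref{eq:SV-recursion} to the summands of \eqref{eq:SV-second-iterate} to extract the $p_0$- and $p_0^2$-coefficients of $\mathscr L_3$. The $p_0^2$ part is $\tfrac{k^2}{4}\cdot$ (terms with $E$), which should cancel against $-6\theta^2p_0\cdot(-kp_0E)$ and $4\theta^3p_0^2E$. The $p_0$ part should cancel against $-6\theta^2p_0$ times the $p_0$-free part of $\mathscr L_2$.
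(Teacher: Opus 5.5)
Your proof is correct and follows the paper's: substituting \cref{lem:HP-SV-cubic} and \eqref{eq:SV-P2-identification} into \eqref{eq:Q4-P} cancels the $E^2$ terms, and independence of $p_0$ follows from stability, which you justify more carefully than the paper through the polynomial-in-$N$ argument. One slip in your optional direct cross-check: the $p_0^2$-coefficient of $\mathscr L_3$ is $\frac{k^2}{2}E$, not $\frac{k^2}{4}E$, because besides the square of the coefficient $(1+k)a-\frac{kp_0}{2}$ there is a second source, namely the term $-\frac k2 p_\ell F$ in \eqref{eq:SV-recursion}, which has no power of $x$ so that $\mathsf E$ supplies another factor $p_0$; only with $\frac{k^2}{2}$ does the cancellation $2\theta^3-6\theta^3+4\theta^3=0$ hold.
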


\begin{proof}
Insert \eqref{eq:HP-SV-cubic} and
\eqref{eq:SV-P2-identification} into \eqref{eq:Q4-P}.  This gives
\eqref{eq:Q4-SV} immediately.  Independence of $p_0$ follows already
from stability, but it is also visible directly from
\eqref{eq:SV-recursion}: the coefficients of $p_0^2$ and $p_0$ cancel
separately in the displayed linear combination.
\end{proof}

\section{Content moments and Bernoulli polynomials}
\label{sec:content-Bernoulli}

We now pass from shifted row coordinates to contents.  This step is
elementary, but it is the precise mechanism that connects the
Dunkl--Heckman hierarchy with the class-algebra operators.

Recall the definitions \eqref{eq:alpha-content} and
\eqref{eq:content-moment}.  Our convention is chosen so that
$c_1(\square)=j-i$.

We use Bernoulli polynomials with generating function
\begin{equation}\label{eq:Bernoulli-generating}
 \frac{ue^{xu}}{e^u-1}
 =\sum_{m\geq0}B_m(x)\frac{u^m}{m!},
\end{equation}
so that $B_1(x)=x-\frac12$ and
\begin{equation}\label{eq:Bernoulli-sums}
 \sum_{j=0}^{n-1}(x+j)^r
 =\frac{B_{r+1}(x+n)-B_{r+1}(x)}{r+1}.
\end{equation}

\begin{proposition}\label{prop:content-Bernoulli}
With the shifted row coordinates $A_i(\lambda)$ of
\eqref{eq:A-coordinates}, one has
\begin{equation}\label{eq:content-Bernoulli}
  {
 C_r^{(\alpha)}(\lambda)
 =\frac{\alpha^r}{r+1}
 \sum_{i\geq1}
 \left[
 B_{r+1}\bigl(A_i(\lambda)\bigr)
 -B_{r+1}\bigl(A_i(0)\bigr)
 \right].}
\end{equation}
In particular, the content moments belong to the shifted-symmetric
algebra generated by the $q_s$.
\end{proposition}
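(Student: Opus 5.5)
Since $c_\alpha(i,j)=\alpha(j-1)-(i-1)$, the plan is to sum the content moments row by row and apply \eqref{eq:Bernoulli-sums}. Write $c_\alpha(i,j)=\alpha\bigl(j-1-\theta(i-1)\bigr)=\alpha\bigl(A_i(0)+(j-1)\bigr)$, using $\theta\alpha=1$ and $A_i(0)=-\theta(i-1)$. Then
\begin{equation*}
 C_r^{(\alpha)}(\lambda)=\alpha^r\sum_{i\geq1}\sum_{j=0}^{\lambda_i-1}\bigl(A_i(0)+j\bigr)^r .
\end{equation*}
Applying \eqref{eq:Bernoulli-sums} with $x=A_i(0)$ and $n=\lambda_i$, and noting that $A_i(0)+\lambda_i=A_i(\lambda)$, the inner sum equals $\bigl(B_{r+1}(A_i(\lambda))-B_{r+1}(A_i(0))\bigr)/(r+1)$. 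This gives \eqref{eq:content-Bernoulli} directly. The sum over $i$ is finite, because the terms with $i>\ell(\lambda)$ vanish. Equivalently, one can compare coefficients of $v^r/r!$ in \eqref{eq:row-geometric-series} after multiplying by $\alpha v$ and using \eqref{eq:Bernoulli-generating}.

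For the second claim, expand $B_{r+1}(x)=\sum_{m}\binom{r+1}{m}B_{r+1-m}x^m$. The right-hand side of \eqref{eq:content-Bernoulli} is then a linear combination of the $S_m(\lambda)$ with $1\le m\le r+1$; the $m=0$ term cancels because $\sum_i(1-1)=0$. It remains to show that each $S_m$ lies in the algebra generated by the $q_s$. By \eqref{eq:q-vs-S-general}, $q_{m+1}=(m+1)\theta S_m+\sum_{j\ge2}(\cdots)S_{m+1-j}$. This system is triangular with invertible leading coefficient $(m+1)\theta$ over $\mathbb Q(\alpha)$, so induction on $m$ expresses $S_m$ as a linear combination of $q_2,\dots,q_{m+1}$. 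Hence $C_r^{(\alpha)}$ is a linear combination of $q_2,\dots,q_{r+2}$, which is consistent with the stability remark preceding \eqref{eq:content-operator-definition}.

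There is no serious obstacle. The only point requiring care is the index shift: the contents are $j-1$ for $j=1,\dots,\lambda_i$, so the starting point of the Bernoulli sum must be $x=A_i(0)$ and not $A_i(0)-\theta$, which would correspond to the $\ell_i$-type coordinates. This should be checked against $r=0$, where the formula gives $\sum_i\lambda_i=|\lambda|$ because $B_1(x)=x-\tfrac12$.
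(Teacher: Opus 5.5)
Your proof is correct and follows the paper's own argument. The row-by-row Bernoulli summation with $x=A_i(0)$ and $n=\lambda_i$ is exactly the paper's computation, and your triangular inversion of \eqref{eq:q-vs-S-general} spells out the step the paper states in one line; in doing so you show that $C_r^{(\alpha)}$ is in fact a linear combination of $q_2,\dots,q_{r+2}$, consistent with the remark \eqref{eq:content-leading-q}.
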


\begin{proof}
The contribution of row $i$ to \eqref{eq:content-moment} is
\begin{align}
 \sum_{j=1}^{\lambda_i}
 \bigl(\alpha(j-1)-(i-1)\bigr)^r
 &=\alpha^r\sum_{j=0}^{\lambda_i-1}
 \left(j-\frac{i-1}{\alpha}\right)^r\\
 &=\frac{\alpha^r}{r+1}
 \left[
 B_{r+1}\left(\lambda_i-\frac{i-1}{\alpha}\right)
 -B_{r+1}\left(-\frac{i-1}{\alpha}\right)
 \right],
\end{align}
by \eqref{eq:Bernoulli-sums}.  Since $\theta=\alpha^{-1}$, the two
arguments in the last line are precisely $A_i(\lambda)$ and $A_i(0)$.
Summing over the rows proves \eqref{eq:content-Bernoulli}.

The difference of Bernoulli polynomials in
\eqref{eq:content-Bernoulli} is a linear combination of the shifted
row power sums $S_1,\ldots,S_{r+1}$.  By
\eqref{eq:q-vs-S-general}, these generate the same algebra as
$q_2,\ldots,q_{r+2}$.
\end{proof}

The preceding relation is triangular.  More precisely, the coefficient
of $q_{r+2}$ in $C_r^{(\alpha)}$ is
\begin{equation}\label{eq:content-leading-q}
 \frac{\alpha^{r+1}}{(r+1)(r+2)}.
\end{equation}
Indeed, the leading term of $B_{r+1}(x)$ is $x^{r+1}$, while
$q_{r+2}=(r+2)\theta S_{r+1}$ modulo
$S_1,\ldots,S_r$.  Thus either the content moments or the logarithmic
Hamiltonians may be used as algebraically independent generators.

\subsection{The first content moments}

Using
\begin{equation}
 B_2(x)=x^2-x+\frac16,\qquad
 B_3(x)=x^3-\frac32x^2+\frac12x,
\end{equation}
together with \eqref{eq:q2-S}--\eqref{eq:q4-S}, we obtain
\begin{align}
 C_0^{(\alpha)}
 &=\frac{\alpha}{2}q_2,
 \label{eq:C0-q}\\
 C_1^{(\alpha)}
 &=\frac{\alpha^2}{6}q_3
 +\frac{\alpha(1-\alpha)}4q_2,
 \label{eq:C1-q}\\
 C_2^{(\alpha)}
 &=\frac{\alpha^3}{12}q_4
 +\frac{\alpha^2(1-\alpha)}6q_3
 +\frac{\alpha(\alpha^2-3\alpha+1)}{12}q_2.
 \label{eq:C2-q}
\end{align}
These identities are identities of shifted-symmetric functions, not
merely evaluations for partitions of bounded size.

For the operators defined by \eqref{eq:content-operator-definition},
equations \eqref{eq:C0-q}--\eqref{eq:C2-q} give
\begin{align}
 \mathbf C_0^{(\alpha)}
 &=\frac{\alpha}{2}\cQ_2=E,
 \label{eq:C0-operator}\\
 \mathbf C_1^{(\alpha)}
 &=\frac{\alpha^2}{6}\cQ_3
 +\frac{\alpha(1-\alpha)}4\cQ_2,
 \label{eq:C1-operator}\\
\mathbf C_2^{(\alpha)}
&=\frac{\alpha^3}{12}\cQ_4
 +\frac{\alpha^2(1-\alpha)}6\cQ_3
 +\frac{\alpha(\alpha^2-3\alpha+1)}{12}\cQ_2.
 \label{eq:C2-operator}
\end{align}

\subsection{Elementary and complete functions of the content alphabet}
\label{sec:elementary-complete-contents}

The preceding moments allow us to apply an arbitrary symmetric function
to the content alphabet
\begin{equation}
 C_\alpha(\lambda)=
 \{c_\alpha(\square)\mid\square\in\lambda\}.
\end{equation}
Two particularly natural families are defined by
\begin{align}
 \mathsf E_\lambda(u)
 &=\sum_{k\geq0}e_k[C_\alpha(\lambda)]u^k
   =\prod_{\square\in\lambda}
    (1+u c_\alpha(\square)),
 \label{eq:content-elementary-eigenvalue}\\
 \mathsf H_\lambda(u)
 &=\sum_{k\geq0}h_k[C_\alpha(\lambda)]u^k
   =\prod_{\square\in\lambda}
    \frac1{1-u c_\alpha(\square)}.
 \label{eq:content-complete-eigenvalue}
\end{align}
The box $(1,1)$ has zero content, so the first product has degree at
most $|\lambda|-1$ when $\lambda$ is nonempty.

Since the operators $\mathbf C_r^{(\alpha)}$ commute, the usual
power-sum formulas define unambiguously two operator series
\begin{align}
 \mathsf E^{(\alpha)}(u)
 &=\exp\left(
   \sum_{r\geq1}\frac{(-1)^{r-1}}r
   u^r\mathbf C_r^{(\alpha)}\right),
 \label{eq:content-elementary-operator}\\
 \mathsf H^{(\alpha)}(u)
 &=\exp\left(
   \sum_{r\geq1}\frac1r
   u^r\mathbf C_r^{(\alpha)}\right).
 \label{eq:content-complete-operator}
\end{align}
Their eigenvalues are respectively
\eqref{eq:content-elementary-eigenvalue} and
\eqref{eq:content-complete-eigenvalue}, and
\begin{equation}
 \mathsf E^{(\alpha)}(-u)\mathsf H^{(\alpha)}(u)=1.
\end{equation}
Writing $\mathsf E^{(\alpha)}(u)=\sum_{k\geq0}\mathsf E_k^{(\alpha)}u^k$
and similarly for $\mathsf H^{(\alpha)}$, one finds
\begin{align}
 \mathsf E_1^{(\alpha)}&=\mathbf C_1^{(\alpha)},&
 \mathsf H_1^{(\alpha)}&=\mathbf C_1^{(\alpha)},\notag\\
 \mathsf E_2^{(\alpha)}
 &=\frac12\left((\mathbf C_1^{(\alpha)})^2
                 -\mathbf C_2^{(\alpha)}\right),&
 \mathsf H_2^{(\alpha)}
 &=\frac12\left((\mathbf C_1^{(\alpha)})^2
                 +\mathbf C_2^{(\alpha)}\right),
 \label{eq:first-EH-content}\\
 \mathsf E_3^{(\alpha)}
 &=\frac16\left((\mathbf C_1^{(\alpha)})^3
 -3\mathbf C_1^{(\alpha)}\mathbf C_2^{(\alpha)}
 +2\mathbf C_3^{(\alpha)}\right),&
 \mathsf H_3^{(\alpha)}
 &=\frac16\left((\mathbf C_1^{(\alpha)})^3
 +3\mathbf C_1^{(\alpha)}\mathbf C_2^{(\alpha)}
 +2\mathbf C_3^{(\alpha)}\right).
 \notag
\end{align}
Thus \eqref{eq:C1-operator}--\eqref{eq:C2-operator} already give
explicit formulas for the first two members in terms of the
$\cQ_r$.  The multiplication-convention operators are their Hall
adjoints.  In particular,
\begin{equation}\label{eq:E1-content-Delta2}
 (\mathsf E_1^{(\alpha)})^{\perp_{\Hall}}
 =(\mathsf H_1^{(\alpha)})^{\perp_{\Hall}}
 =\frac12\Delta_2(\alpha).
\end{equation}

\subsubsection{The classical specialization and monotone Hurwitz numbers}

At $\alpha=1$, the operators above are symmetric functions of the
Jucys--Murphy elements.  Jucys' identity
\begin{equation}\label{eq:Jucys-elementary-identity}
 \prod_{i=1}^n(v+\xi_i)
 =\sum_{\sigma\in\mathfrak S_n}
   v^{\#\operatorname{cycles}(\sigma)}\sigma
\end{equation}
shows that $e_k(\Xi_n)$ is the sum of all permutations having $n-k$
cycles.  Complete functions are subtler.  Following
\cite{LassalleClassExpansion,FerayComplete}, define
\begin{equation}\label{eq:monotone-Phi-definition}
 \Phi(z,u)=\mathsf H^{(1)}(u)e^{zp_1}
 =\sum_{n\geq0}\Phi_n(u)\frac{z^n}{n!},
\end{equation}
where
\begin{equation}\label{eq:monotone-Phi-Schur}
 \Phi_n(u)=\sum_{\lambda\vdash n}
 f^\lambda\prod_{\square\in\lambda}
 \frac1{1-u c(\square)}s_\lambda.
\end{equation}
The coefficient of $u^k$ encodes $h_k(\Xi_n)$.

We include a short derivation of its evolution equation.  If
$\lambda=\mu\setminus\square$, the branching rule and
\eqref{eq:monotone-Phi-Schur} give
\begin{equation}
 \langle p_1\Phi_{n-1},s_\mu\rangle_{\Hall}
 =\sum_{\mu/\lambda=\square}
 \frac{f^\lambda}{f^\mu}(1-u c(\square))
 \langle\Phi_n,s_\mu\rangle_{\Hall}.
 \label{eq:Phi-branching-step}
\end{equation}
We shall use the following standard content identity.

\begin{lemma}\label{lem:removable-content-average}
For every $\mu\vdash n$,
\begin{equation}
 \sum_{\mu/\lambda=\square}
 \frac{f^\lambda}{f^\mu}c(\square)
 =\frac2n\sum_{\square\in\mu}c(\square).
 \label{eq:removable-content-average}
\end{equation}
\end{lemma}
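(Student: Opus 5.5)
The plan is to read the identity as a statement about the action of the transposition class sum $T=\sum_{i<j}(i,j)$ on $\mathbb C\mathfrak S_n$, restricted from $\mathfrak S_n$ to $\mathfrak S_{n-1}$. Since $T=\xi_2+\cdots+\xi_n$ and the Jucys--Murphy elements satisfy $\xi_n=T_n-T_{n-1}$, where $T_m$ is the transposition sum of $\mathfrak S_m$, we have the key relation $\xi_n=T_n-T_{n-1}$. We then compute a suitable trace of $\xi_n$ in the irreducible module $V^\mu$ in two ways.

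The first computation uses branching. Restricting $V^\mu$ to $\mathfrak S_{n-1}$ gives $\bigoplus_{\mu/\lambda=\square}V^\lambda$. On each summand $V^\lambda$, the element $\xi_n$ acts by the scalar $c(\square)$; this follows from $\xi_n=T_n-T_{n-1}$ and the content spectrum \eqref{eq:JM-content-spectrum}, since $T_n$ acts by $\sum_{\mu}c$ and $T_{n-1}$ by $\sum_\lambda c$. Taking the trace therefore gives
\[
\operatorname{tr}_{V^\mu}(\xi_n)=\sum_{\mu/\lambda=\square}f^\lambda c(\square).
\]

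The second computation expresses the same trace through characters. Each transposition $(i,n)$ is conjugate to every other transposition. Hence $\operatorname{tr}_{V^\mu}(\xi_n)=(n-1)\chi^\mu(\tau)$, where $\tau$ is any transposition. On the other hand, $T$ acts on $V^\mu$ by the scalar $\sum_{\square\in\mu}c(\square)$, so that
\[
\binom n2\chi^\mu(\tau)=f^\mu\sum_{\square\in\mu}c(\square).
\]
Combining the two formulas,
\[
(n-1)\chi^\mu(\tau)=\frac{2}{n}f^\mu\sum_{\square\in\mu}c(\square).
\]
Equating this with the branching computation and dividing by $f^\mu$ yields \eqref{eq:removable-content-average}.

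The only delicate point is identifying the scalar by which $\xi_n$ acts on $V^\lambda\subset V^\mu$. The difference $T_n-T_{n-1}$ settles this cleanly. An alternative route would be the Okounkov--Vershik Gelfand--Tsetlin basis, which gives the same scalar directly. For $n=1$ both sides vanish, since the only box has content $0$, so the case $n\ge2$ suffices.
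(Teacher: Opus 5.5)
Your argument is correct and is essentially the paper's proof, written in the group algebra instead of Fock space. The paper obtains $f^\mu$ times the left-hand side $=(n-1)\chi^\mu(\tau)$ by pairing $L_1s_\mu$ with $p_1^{n-1}$, where $L_1=L_{-1}^{\perp_{\Hall}}=-[\mathscr G,D_1]$, and moving $L_{-1}$ across to get $(n-1)\langle s_\mu,p_2p_1^{n-2}\rangle_{\Hall}$; under the Frobenius dictionary ($D_1\leftrightarrow$ restriction, $\mathscr G\leftrightarrow$ the transposition sums $T_m$) this is exactly your trace of $\xi_n=T_n-T_{n-1}$ on $\operatorname{Res}V^\mu$, and it concludes with the same identity $\binom n2\chi^\mu(\tau)=f^\mu\sum_{\square\in\mu}c(\square)$.
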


\begin{proof}
Let $L_{-1}=[\mathscr G,p_1]=\sum_{j\geq1}p_{j+1}D_j$ and let
$L_1=L_{-1}^{\perp_{\Hall}}$.  Pieri's rule identifies the left-hand
side of \eqref{eq:removable-content-average}, multiplied by $f^\mu$,
with
\begin{equation}
 \langle L_1s_\mu,p_1^{n-1}\rangle_{\Hall}
 =\langle s_\mu,L_{-1}p_1^{n-1}\rangle_{\Hall}
 =(n-1)\langle s_\mu,p_2p_1^{n-2}\rangle_{\Hall}.
\end{equation}
The Frobenius character formula for a transposition gives
\begin{equation}
 \frac{n(n-1)}2\frac{1}{f^\mu}
 \langle s_\mu,p_2p_1^{n-2}\rangle_{\Hall}
 =\sum_{\square\in\mu}c(\square),
\end{equation}
which proves the claim.
\end{proof}

Since $\sum_{\mu/\lambda=\square}f^\lambda=f^\mu$, equations
\eqref{eq:Phi-branching-step} and
\eqref{eq:removable-content-average} give the following result.

\begin{theorem}\label{thm:monotone-evolution}
The series \eqref{eq:monotone-Phi-definition} is the unique solution of
\begin{equation}\label{eq:monotone-evolution}
 z\frac{\partial\Phi}{\partial z}
 =zp_1\Phi+2u\mathscr G\Phi,
 \qquad \Phi(0,u)=1.
\end{equation}
\end{theorem}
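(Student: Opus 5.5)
Write $\Phi=\sum_n\Phi_n(u)z^n/n!$. The equation \eqref{eq:monotone-evolution} is then equivalent to the family of identities
\begin{equation*}
 \frac{1}{(n-1)!}\Phi_n=\frac{1}{(n-1)!}p_1\Phi_{n-1}+\frac{2u}{n!}\mathscr G\Phi_n,
 \qquad\text{that is,}\qquad
 \Bigl(1-\frac{2u}{n}\mathscr G\Bigr)\Phi_n=p_1\Phi_{n-1}.
\end{equation*}
We compare coefficients of $z^n$, with $\Phi_0=1$.

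Uniqueness is immediate. The operator $1-\frac{2u}{n}\mathscr G$ is $1+O(u)$, so it is invertible on $\Lambda_n[[u]]$. The recursion therefore determines $\Phi_n$ from $\Phi_{n-1}$, and the initial condition fixes $\Phi_0$. Equivalently, one may read the recursion degree by degree in $z$: the coefficient of $z^n$ on the left is $n\Phi_n/n!$, which is determined by lower data together with $u\mathscr G\Phi_n$, and hence determined recursively in $u$.

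For existence we verify the recursion by pairing both sides with $s_\mu$, $\mu\vdash n$. On the left, $\mathscr G$ is self-adjoint and diagonal on Schur functions with eigenvalue $\sum_{\square\in\mu}c(\square)$. This follows from \eqref{eq:classical-D-spectrum} together with $\mathscr D_1=\mathscr G$. The pairing of the left side is therefore
\begin{equation*}
 \Bigl(1-\frac{2u}{n}\sum_{\square\in\mu}c(\square)\Bigr)\langle\Phi_n,s_\mu\rangle_{\Hall}.
\end{equation*}
On the right, \eqref{eq:Phi-branching-step} gives
\begin{equation*}
 \sum_{\mu/\lambda=\square}\frac{f^\lambda}{f^\mu}\bigl(1-uc(\square)\bigr)\langle\Phi_n,s_\mu\rangle_{\Hall}.
\end{equation*}
We split this sum into its two parts:
\begin{itemize}
 \item the constant part is $\sum f^\lambda/f^\mu=1$, by the branching rule;
 \item the content part equals $\frac2n\sum_{\square\in\mu}c(\square)$, by \cref{lem:removable-content-average}.
\end{itemize}
The two sides therefore agree for every $\mu$, which proves the recursion and hence \eqref{eq:monotone-evolution}.

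The main point to check is \eqref{eq:Phi-branching-step} itself, which I will re-derive. By Pieri's rule,
\begin{equation*}
 \langle p_1\Phi_{n-1},s_\mu\rangle=\sum_{\lambda=\mu\setminus\square}f^\lambda\prod_{\lambda}(1-uc)^{-1}.
\end{equation*}
Removing the box $\square$ from $\mu$ multiplies the product over $\mu$ by $(1-uc(\square))$, so
\begin{equation*}
 \prod_{\lambda}(1-uc)^{-1}=(1-uc(\square))\prod_{\mu}(1-uc)^{-1}.
\end{equation*}
Since $\langle\Phi_n,s_\mu\rangle=f^\mu\prod_\mu(1-uc)^{-1}$, this gives \eqref{eq:Phi-branching-step}.

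The remaining care is bookkeeping: the factor $z\partial_z$ against $1/n!$, and the normalization $\mathscr G=\frac12\Delta_2(1)$. The factor $2$ in $2u\mathscr G$ must match the $\frac2n$ in the lemma, and with the conventions above it does.
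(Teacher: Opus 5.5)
Your proof is correct and follows the paper's own argument: extract the coefficient of $z^n/n!$, pair with $s_\mu$ for $\mu\vdash n$, and combine \eqref{eq:Phi-branching-step} with $\sum_{\mu/\lambda=\square}f^\lambda=f^\mu$ and \cref{lem:removable-content-average}. You also spell out steps the paper leaves implicit, all of which are correct: $\mathscr G$ is Hall self-adjoint and acts on $s_\mu$ by the content sum, uniqueness follows because $1-\frac{2u}{n}\mathscr G$ is invertible on $\Lambda[[u]]$, and you re-derive the branching step from Pieri's rule.
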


Indeed, comparison of the coefficient of $z^n/n!$ reduces
\eqref{eq:monotone-evolution} precisely to the two preceding formulas.
The logarithm of \eqref{eq:monotone-Phi-definition} is the generating
series of connected simple monotone Hurwitz numbers
\cite{GGPN13}.  Equation \eqref{eq:monotone-evolution} therefore yields
their monotone cut-and-join equation; see also \cite{DKPS}.  Lassalle's
recurrences and the Virasoro constraints of F\'eray are alternative
forms of the same structure.  The $b=\alpha-1$ deformation of this
picture, including an evolution equation and Virasoro constraints, is
developed by Bonzom, Chapuy, and Do{\l}\k{e}ga
\cite{BonzomChapuyDolega}.  

\subsection{The transposition observable}

In our reduced-cycle normalization, the spectral Hamiltonian associated
with the transposition class has eigenvalue
$2C_1^{(\alpha)}(\lambda)$.  We are therefore led to
\begin{equation}\label{eq:H2-from-content}
  {
 \cH_2(\alpha)
 =2\mathbf C_1^{(\alpha)}
 =\frac{\alpha^2}{3}\cQ_3
 +\frac{\alpha(1-\alpha)}2\cQ_2.}
\end{equation}
At $\alpha=1$, this is twice the ordinary sum-of-contents operator, in
agreement with \eqref{eq:Delta2-vs-Goulden}.

\section{The operator $\Delta_2(\alpha)$}
\label{sec:Delta2}

We now return from the spectral convention to the multiplication
convention of the Goulden--Jackson product.  Recall that, in the latter
convention, $\Delta_\mu(\alpha)$ denotes multiplication by the stable
series $\widehat p_\mu$.  The Hall-dual Jack functions ${J'}_\lambda^{(\alpha)}$ diagonalize this product,
but our identification with the ordinary Jack basis reverses the two
arguments of the Hall pairing.  Accordingly, as stipulated in
\eqref{eq:adjoint-convention},
\begin{equation}\label{eq:Delta-H-adjoint-again}
 \Delta_\mu(\alpha)=\cH_\mu(\alpha)^{\perp_{\Hall}}.
\end{equation}

For partitions, or finite multisets, $I$ and $J$ of positive integers,
we use the normally ordered notation
\begin{equation}\label{eq:block-notation}
 (I\mid J)=p_I D_{p_J}.
\end{equation}
Since multiplication by $p_k$ and $D_k$ are Hall adjoints,
\begin{equation}\label{eq:block-Hall-adjoint}
 (I\mid J)^{\perp_{\Hall}}=(J\mid I).
\end{equation}
Thus Hall duality amounts simply to reflecting every differential
block.

\subsection{The second operator}

We first make the comparison with the finite-variable Hamiltonian
explicit.  On symmetric polynomials, the second
Heckman--Polychronakos operator is
\begin{equation}\label{eq:P2-power-sums}
\begin{split}
 \cP_2^{(N)}={}&
 \sum_{i,j\geq1}p_{i+j}D_iD_j
 +\theta\sum_{i,j\geq1}p_ip_jD_{i+j}\\
 &+\sum_{k\geq1}\bigl((1-\theta)k+\theta N\bigr)p_kD_k.
\end{split}
\end{equation}
All sums are locally finite on $\Lambda_N$.  This is the usual
collective-variable form of the Calogero--Sutherland Hamiltonian.

\begin{theorem}\label{thm:Delta2-explicit}
In the conventions of this paper,
\begin{equation}\label{eq:Delta2-explicit-new}
 {
\begin{split}
 \Delta_2(\alpha)={}&
 \alpha\sum_{i,j\geq1}p_ip_jD_{i+j}
 +\sum_{i,j\geq1}p_{i+j}D_iD_j\\
 &+(\alpha-1)\sum_{k\geq1}(k-1)p_kD_k.
\end{split}}
\end{equation}
Its eigenvalue, in the dual Jack convention, is
$2C_1^{(\alpha)}(\lambda)$.
\end{theorem}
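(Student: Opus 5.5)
The plan is to compute $\cH_2(\alpha)$ from \eqref{eq:H2-from-content} using the explicit stable forms \eqref{eq:Q2-stable-explicit} and \eqref{eq:Q3-stable-explicit}. Once $\cH_2(\alpha)$ is normally ordered, we take its Hall adjoint through the reflection rule \eqref{eq:block-Hall-adjoint}, in accordance with \eqref{eq:Delta-H-adjoint-again}.

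First we substitute. Since $\theta=\alpha^{-1}$, we have
\begin{equation*}
 \frac{\alpha^2}{3}\cQ_3
 =\sum_{i,j}p_ip_jD_{i+j}+\alpha\sum_{i,j}p_{i+j}D_iD_j
 +(\alpha-1)\sum_k kp_kD_k ,
\end{equation*}
and
\begin{equation*}
 \frac{\alpha(1-\alpha)}{2}\cQ_2=(1-\alpha)E .
\end{equation*}
Adding these gives
\begin{equation*}
 \cH_2(\alpha)=\sum_{i,j}p_ip_jD_{i+j}+\alpha\sum_{i,j}p_{i+j}D_iD_j+(\alpha-1)\sum_k(k-1)p_kD_k .
\end{equation*}
Every block here is already normally ordered. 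Reflecting each block, with $(i,j\mid i+j)\leftrightarrow(i+j\mid i,j)$ and $(k\mid k)$ self-adjoint, exchanges the join and cut sums. The result is exactly \eqref{eq:Delta2-explicit-new}.

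The eigenvalue statement then follows formally. $\cH_2(\alpha)=2\mathbf C_1^{(\alpha)}$ is diagonal on $P_\lambda^{(\alpha)}$ with eigenvalue $2C_1^{(\alpha)}(\lambda)$ by \cref{thm:Q-spectrum} and \eqref{eq:C1-operator}. Its Hall adjoint is therefore diagonal on the Hall-dual basis $Q'_\lambda$ with the same eigenvalues, since $\langle \cH^{\perp}Q'_\lambda,P_\mu\rangle=\langle Q'_\lambda,\cH P_\mu\rangle$.

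The main obstacle is that the argument only establishes $\Delta_2(\alpha)=\cH_2(\alpha)^{\perp_{\Hall}}$ by the convention \eqref{eq:adjoint-convention}. To confirm that this operator really is multiplication by $\widehat p_2$ for $\times_\alpha$, we have two checks. The first is \eqref{eq:GJ-idempotents}: the eigenvalue of multiplication by $\widehat p_2$ on $Q'_\lambda$ is the Jack central character of $\widehat p_2$, which should be $2C_1^{(\alpha)}(\lambda)$ in the reduced normalization. This can be tested at $\alpha=1$, where it matches $\Delta_2(1)=2\mathscr G$ from \eqref{eq:Delta2-vs-Goulden}. The second is to apply \eqref{eq:Delta2-explicit-new} to $\widehat p_2$ and recover \eqref{eq:stable-p2-Jack-example}. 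The linear term $(\alpha-1)\sum(k-1)p_kD_k$ produces exactly $2(\alpha-1)\widehat p_2$, which gives a consistency check on the sign and normalization of the correction term.
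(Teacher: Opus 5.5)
Your proof is correct and is essentially the paper's argument: substitute into \eqref{eq:H2-from-content}, simplify to \eqref{eq:H2-explicit}, and reflect every block by \eqref{eq:block-Hall-adjoint}. The only difference is that you start from the stabilized forms \eqref{eq:Q2-stable-explicit}--\eqref{eq:Q3-stable-explicit}, while the paper substitutes the finite-$N$ expressions \eqref{eq:Q2-P}--\eqref{eq:Q3-P} together with \eqref{eq:P2-power-sums} and checks that the $N$-dependence cancels; that substitution is exactly how \eqref{eq:Q3-stable-explicit} is obtained, so the two computations coincide, and your consistency check against \eqref{eq:stable-p2-Jack-example} also works out in full.
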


\begin{proof}
Substituting \eqref{eq:Q2-P} and \eqref{eq:Q3-P} into
\eqref{eq:H2-from-content}, and using $\theta=\alpha^{-1}$, gives
\begin{align}
 \cH_2(\alpha)
 &=\alpha\bigl(\cP_2^{(N)}-\theta N\cP_1^{(N)}\bigr)
 +(1-\alpha)\cP_1^{(N)}\notag\\
 &=\alpha\cP_2^{(N)}-(N+\alpha-1)E.
 \label{eq:H2-vs-P2}
\end{align}
Since $\cP_1^{(N)}=E$, formula \eqref{eq:P2-power-sums} yields
\begin{equation}\label{eq:H2-explicit}
\begin{split}
 \cH_2(\alpha)={}&
 \sum_{i,j\geq1}p_ip_jD_{i+j}
 +\alpha\sum_{i,j\geq1}p_{i+j}D_iD_j\\
 &+(\alpha-1)\sum_{k\geq1}(k-1)p_kD_k.
\end{split}
\end{equation}
In particular, all dependence on $N$ has disappeared.  Applying
\eqref{eq:block-Hall-adjoint} to \eqref{eq:H2-explicit} gives
\eqref{eq:Delta2-explicit-new}.  The assertion about the eigenvalue is
\eqref{eq:H2-from-content}.
\end{proof}

At $\alpha=1$, formula \eqref{eq:Delta2-explicit-new} is twice
Goulden's operator \eqref{eq:classical-cutjoin}.  The last line of
\eqref{eq:Delta2-explicit-new} is the first genuinely Jack correction;
it is diagonal and vanishes in the Schur case.

\section{The third stable class operator
\texorpdfstring{$\Delta_3(\alpha)$}{Delta3(alpha)}}
\label{sec:Delta3}

We now apply the preceding normal-ordering algorithm to the next
content observable.  In the reduced-cycle normalization used for the
stable class product, the shifted power sum attached to a $3$-cycle is
\begin{equation}\label{eq:p3sharp-content}
 p_3^\#(\lambda;\alpha)
 =3\left(
 C_2^{(\alpha)}(\lambda)
 -(\alpha-1)C_1^{(\alpha)}(\lambda)
 -\alpha\binom{|\lambda|}{2}
 \right).
\end{equation}
Accordingly, define the spectral Hamiltonian
\begin{equation}\label{eq:H3-content-definition}
 \cH_3(\alpha)
 =3\left(
 \mathbf C_2^{(\alpha)}
 -(\alpha-1)\mathbf C_1^{(\alpha)}
 -\frac{\alpha}{2}(E^2-E)
 \right).
\end{equation}
Using \eqref{eq:C0-operator}--\eqref{eq:C2-operator}, one obtains
\begin{align}
 \cH_3(\alpha)={}&
 \frac{\alpha^3}{4}\cQ_4
 +\alpha^2(1-\alpha)\cQ_3
 \notag\\
 &+\frac{\alpha(4\alpha^2-9\alpha+4)}4\cQ_2
 -\frac{3\alpha}{2}(E^2-E).
\label{eq:H3-Q}
\end{align}

Formula \eqref{eq:Q4-SV} turns this expression into a direct normal
ordering problem.  Before stating the result, it is useful to record
the complete cancellation of the auxiliary dimension.

\begin{lemma}\label{lem:H3-SV}
In $\operatorname{End}(\Lambda[p_0])$ one has
\begin{align}
 \cH_3(\alpha)
={}&\alpha^2\mathscr L_3(-\alpha^{-1},p_0)
 +\left(
  3\alpha(1-\alpha)-\frac{3\alpha p_0}{2}
  \right)
  \mathscr L_2(-\alpha^{-1},p_0)
 \notag\\
&+\left(
 p_0^2+3(\alpha-1)p_0+2\alpha^2-3\alpha+2
 \right)E
 -\frac{3\alpha}{2}E^2.
\label{eq:H3-SV}
\end{align}
The normally ordered expression on the right is independent of $p_0$.
\end{lemma}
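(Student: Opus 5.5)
The plan is to substitute the Sergeev--Veselov expressions for $\cQ_2,\cQ_3,\cQ_4$ into \eqref{eq:H3-Q}, collect terms, and then deal separately with the independence of $p_0$. Throughout, $k=-\theta=-\alpha^{-1}$ and $p_0=N$. We use \eqref{eq:Q2-stable-explicit}, $\cQ_2=2\theta E$, and, from \eqref{eq:Q3-P} and \eqref{eq:SV-P2-identification}, $\cQ_3=3\theta\bigl(\mathscr L_2(-\theta,p_0)-\theta p_0E\bigr)$. For $\cQ_4$ we take \eqref{eq:Q4-SV} from \cref{prop:Q4-SV}. Each term of \eqref{eq:H3-Q} then becomes a combination of $\mathscr L_3$, $\mathscr L_2$, $E$ and $E^2$, and we collect coefficients using $\alpha\theta=1$.

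The coefficient bookkeeping is short.
\begin{itemize}
\item[(i)] The term $\frac{\alpha^3}{4}\cQ_4$ contributes $\alpha^2\mathscr L_3-\frac{3\alpha p_0}{2}\mathscr L_2+p_0^2E$.
\item[(ii)] The term $\alpha^2(1-\alpha)\cQ_3$ contributes $3\alpha(1-\alpha)\mathscr L_2+3(\alpha-1)p_0E$.
\item[(iii)] The term $\frac{\alpha(4\alpha^2-9\alpha+4)}{4}\cQ_2$ contributes $\frac{4\alpha^2-9\alpha+4}{2}E$.
\item[(iv)] The last term of \eqref{eq:H3-Q} contributes $-\frac{3\alpha}{2}E^2+\frac{3\alpha}{2}E$.
\end{itemize}
Adding the two $p_0$-free multiples of $E$ from (iii) and (iv) gives $\frac{4\alpha^2-6\alpha+4}{2}=2\alpha^2-3\alpha+2$. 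This reproduces every coefficient in \eqref{eq:H3-SV}, so the identity holds after specializing $p_0=N$, on $\Lambda_N$, for every $N$.

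For the independence of $p_0$, the argument splits into two parts. The part coming from $\cQ_4$ is independent of $p_0$ by \cref{prop:Q4-SV}. For the part coming from $\cQ_3$, \eqref{eq:SV-L2} shows that the only $p_0$-dependence of $\mathscr L_2(k,p_0)$ is $-kp_0E=\theta p_0E$. This cancels exactly against $-\theta p_0E$ in $\mathscr L_2-\theta p_0E$. The remaining pieces $E$ and $E^2$ do not involve $p_0$. Hence the normally ordered right-hand side of \eqref{eq:H3-SV} is a $p_0$-free expression in the $p_a,D_a$.

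The main obstacle is to make precise the passage from the finite identities, valid on $\Lambda_N$ with $p_0=N$, to an identity in $\operatorname{End}(\Lambda[p_0])$ with $p_0$ formal. I would argue through normal forms. After normal ordering via \eqref{eq:SV-recursion}, each side is a finite sum of blocks $(I\mid J)$ whose coefficients are polynomials in $p_0$. On symmetric functions of degree at most $N$, the $p_a$ are algebraically independent in $\Lambda_N$. Therefore coefficients of blocks with $|I|,|J|\leq N$ are determined by the action on $\Lambda_N$, and the two sides agree as polynomials in $p_0$ at infinitely many values $p_0=N$. This gives the identity with $p_0$ formal. Alternatively, I would avoid this step by checking the cancellation directly from \eqref{eq:SV-recursion}, as in the proof of \cref{prop:Q4-SV}.
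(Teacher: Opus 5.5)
Your proposal is correct and follows the paper's proof: you substitute \eqref{eq:Q2-P}, \eqref{eq:Q3-P} (with \eqref{eq:SV-P2-identification}) and \eqref{eq:Q4-SV} into \eqref{eq:H3-Q} and collect the coefficients of $\mathscr L_3,\mathscr L_2,E,E^2$, and your bookkeeping (i)--(iv) is right. The differences are minor. You get $p_0$-independence by grouping the right-hand side into the $p_0$-free pieces $\frac{\alpha^3}{4}\cQ_4$ (via \cref{prop:Q4-SV}) and $\mathscr L_2-\theta p_0E$, whereas the paper checks the cancellation directly with \eqref{eq:SV-recursion}; and you spell out the passage from $p_0=N$ to formal $p_0$, which the paper leaves tacit (the block sums there are locally finite rather than finite, which does not affect the argument).
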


\begin{proof}
Substitute \eqref{eq:Q2-P}, \eqref{eq:Q3-P}, and
\eqref{eq:Q4-SV} into \eqref{eq:H3-Q}, and use
$\theta=\alpha^{-1}$ and $\mathscr L_1=E$.  Collecting the
coefficients of $\mathscr L_3,\mathscr L_2,E,E^2$ gives
\eqref{eq:H3-SV}.  To check the last assertion without using the
spectral construction, insert \eqref{eq:SV-L2} and compute
$\mathscr L_3=\mathsf E\mathscr D_{-1/\alpha}^3$ by
\eqref{eq:SV-recursion}.  The coefficients of $p_0^2$ cancel between
the first and third terms, and those of $p_0$ cancel between the first
three terms.  No specialization of $p_0$ is used.
\end{proof}

For complete reproducibility, we record the output of the normal
ordering before taking the Hall adjoint.  Applying
\eqref{eq:SV-recursion} to \eqref{eq:H3-SV} gives
\begin{align}
\cH_3(\alpha)
={}&\sum_{i,j,k\geq1}\Bigl(
 p_ip_jp_kD_{i+j+k}
 +\alpha p_{i+j}p_kD_{i+k}D_j
 \notag\\
&\qquad
 +\alpha p_ip_{j+k}D_{i+j}D_k
 +p_{i+j+k}D_{i+j+k}
 \notag\\
&\qquad
 +\alpha^2p_{i+j+k}D_iD_jD_k
 +\alpha p_{i+k}p_jD_iD_{j+k}
 \Bigr)
 \notag\\
&+(\alpha-1)\Biggl[
 \frac32\sum_{i,j\geq1}(i+j-2)
 \Bigl(
  \alpha p_{i+j}D_iD_j
  +p_ip_jD_{i+j}
 \Bigr)
 \notag\\
&\hspace{35mm}
 +(2\alpha-1)\sum_{k\geq1}
 \binom{k-1}{2}p_kD_k
 \Biggr].
\label{eq:H3-normal-ordered}
\end{align}
Each line can therefore be checked directly from
\eqref{eq:SV-recursion}, without using the eigenvalues or the proposed
formula for the multiplication operator.

\begin{theorem}\label{thm:Delta3-explicit}
The multiplication operator associated with the stable reduced
$3$-cycle is
\begin{align}
\Delta_3(\alpha)
={}&\sum_{i,j,k\geq1}\Bigl(
 p_{i+j+k}D_iD_jD_k
 +\alpha p_{i+k}p_jD_{i+j}D_k
 \notag\\
&\qquad
 +\alpha p_{i+j}p_kD_iD_{j+k}
 +p_{i+j+k}D_{i+j+k}
 \notag\\
&\qquad
 +\alpha^2p_ip_jp_kD_{i+j+k}
 +\alpha p_ip_{j+k}D_{i+k}D_j
 \Bigr)
 \notag\\
&+(\alpha-1)\Biggl[
 \frac32\sum_{i,j\geq1}(i+j-2)
 \Bigl(
  \alpha p_ip_jD_{i+j}
  +p_{i+j}D_iD_j
 \Bigr)
 \notag\\
&\hspace{35mm}
 +(2\alpha-1)\sum_{k\geq1}
 \binom{k-1}{2}p_kD_k
 \Biggr].
\label{eq:Delta3-explicit}
\end{align}
Its eigenvalue in the dual Jack convention is
$p_3^\#(\lambda;\alpha)$ from \eqref{eq:p3sharp-content}.
\end{theorem}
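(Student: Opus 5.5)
The argument mirrors the proof of \cref{thm:Delta2-explicit}: we identify the spectral Hamiltonian $\cH_3(\alpha)$ in normally ordered form and then take its Hall adjoint. By the convention \eqref{eq:Delta-H-adjoint-again}, $\Delta_3(\alpha)=\cH_3(\alpha)^{\perp_{\Hall}}$. So it suffices to establish the normal-ordered expression \eqref{eq:H3-normal-ordered} for $\cH_3(\alpha)$ and then reflect every differential block by \eqref{eq:block-Hall-adjoint}. The eigenvalue assertion follows from the definition \eqref{eq:H3-content-definition}. Indeed, $\mathbf C_2^{(\alpha)}$ and $\mathbf C_1^{(\alpha)}$ have eigenvalues $C_2^{(\alpha)}$ and $C_1^{(\alpha)}$, and $E$ acts by $|\lambda|$, so $\tfrac12(E^2-E)$ acts by $\binom{|\lambda|}{2}$. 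This gives \eqref{eq:p3sharp-content} on $P_\lambda^{(\alpha)}$, hence on the Hall-dual basis after adjunction.

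To obtain \eqref{eq:H3-normal-ordered}, I would start from \cref{lem:H3-SV}, which writes $\cH_3(\alpha)$ as a combination of $\mathscr L_3$, $\mathscr L_2$, $E$ and $E^2$ with $k=-\alpha^{-1}$. The explicit form of $\mathscr L_2$ is \eqref{eq:SV-L2}, and $E^2=\sum_{a,b}p_ap_bD_aD_b+\sum_a a p_aD_a$. The only genuine work is $\mathscr L_3=\mathsf E\mathscr D_k^3$. I compute it by applying the recursion \eqref{eq:SV-recursion} to each of the four summands of \eqref{eq:SV-second-iterate}, then applying $\mathsf E$.

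Each summand has the shape $x^\ell F$. The first term of the recursion adds one more $D$ acting on all of $F$. Here the Leibniz rule produces contractions $D_a(p_b)=a\delta_{ab}$ whenever $F$ contains $p$'s, as in the summand $x^sp_rD_{r+s}f$. The scalar term contributes $((1+k)\ell-kp_0/2)$, and the $p_\ell$ and $p_ax^{\ell-a}$ terms generate the join-type monomials. After $\mathsf E$, I sort the result into four groups:
\begin{itemize}
\item cubic blocks: $(3\mid \cdot)$, $(2,1\mid 2)$, $(1,1,1\mid\cdot)$ types;
\item quadratic blocks $p_{i+j}D_iD_j$ and $p_ip_jD_{i+j}$, with coefficients linear in $i+j$ and $p_0$;
\item diagonal terms $p_kD_k$ with polynomial coefficients in $k$ and $p_0$;
\item the term $p_ap_bD_aD_b$, which must cancel against $-\tfrac{3\alpha}{2}E^2$.
\end{itemize}
Multiplying by $\alpha^2$ turns $k=-1/\alpha$ powers into the displayed powers of $\alpha$. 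For instance, $k^2\sum p_ip_jp_kD_{i+j+k}$ becomes the coefficient-one cubic term. After this rescaling, I would verify three things: the $p_0$ and $p_0^2$ coefficients cancel as in \cref{lem:H3-SV}; the $p_ap_bD_aD_b$ terms cancel; and the remaining diagonal and quadratic coefficients assemble into $(\alpha-1)\tfrac32(i+j-2)$ and $(\alpha-1)(2\alpha-1)\binom{k-1}{2}$.

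The main obstacle is this bookkeeping of the diagonal and quadratic lower-order terms. They come from several sources: the contractions in the Leibniz step, the scalar factors $(1+k)\ell$ at two successive levels, the $\mathscr L_2$ and $E$ corrections, and the quadratic sums $\sum_{a=1}^{\ell-1}$ (which yield factors like $\ell-1$). I would control it in two ways. First, I would check the coefficients on low-degree test vectors $p_1,p_2,p_{11},p_3,p_{21},p_{111}$ against the eigenvalues $p_3^\#$ on the corresponding Jack polynomials. The diagonal coefficient $\binom{k-1}{2}$ is fixed by acting on $p_k$ modulo other terms, using the one-row and one-column Jacks. Second, I would check the specialization $\alpha=1$. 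There all $(\alpha-1)$ terms vanish and the formula must reduce to the classical $3$-cycle operator of \cite{LascouxThibon}.

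Finally, I apply the block reflection $(I\mid J)\mapsto(J\mid I)$ term by term. This swaps $p_ip_jp_kD_{i+j+k}$ with $p_{i+j+k}D_iD_jD_k$ and exchanges the two mixed quadratic-quadratic blocks. The $\alpha$ and $\alpha^2$ weights move accordingly, which yields \eqref{eq:Delta3-explicit}.
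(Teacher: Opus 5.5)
Your proposal is correct and follows the paper's own proof. Both start from Lemma~\ref{lem:H3-SV}, normally order $\mathscr L_3=\mathsf E\mathscr D_{-1/\alpha}^3$ by applying \eqref{eq:SV-recursion} to the four summands of \eqref{eq:SV-second-iterate}, cancel the $p_0$ and $E^2$ terms and collect the remaining blocks into \eqref{eq:H3-normal-ordered}, then Hall-reflect block by block via \eqref{eq:block-Hall-adjoint}, with the eigenvalue read off from \eqref{eq:H3-content-definition}.

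Two small corrections: there are three mixed $(2\mid 2)$ blocks rather than two, and your test-vector and $\alpha=1$ checks are consistency checks, not part of the proof.
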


\begin{proof}
Apply \eqref{eq:SV-recursion} successively three times.  In the third
iteration there are four types of contributions:
\begin{enumerate}
 \item $\partial$ differentiates the final argument three times;
 \item it differentiates once a power sum created by $\mathscr A$;
 \item $\mathscr A$ is used once or twice;
 \item the scalar terms $-2\ell x^\ell$ in \eqref{eq:SV-A} contract
       one or two indices.
\end{enumerate}
After applying $\mathsf E$, substitute the result in
\eqref{eq:H3-SV}.  The first three types give the six terms in the
triple sum of \eqref{eq:H3-normal-ordered}.  The
one-index contractions give
\begin{equation}
 (\alpha-1)(2\alpha-1)
 \sum_{k\geq1}\binom{k-1}{2}p_kD_k,
\end{equation}
while the two-index contractions give
\begin{equation}
 \frac32(\alpha-1)
 \sum_{i,j\geq1}(i+j-2)
 \left(
  \alpha p_ip_jD_{i+j}+p_{i+j}D_iD_j
 \right).
\end{equation}
Together these contributions give \eqref{eq:H3-normal-ordered}.
Its normally ordered Hall adjoint is exactly
\eqref{eq:Delta3-explicit}.  Finally,
$\Delta_3(\alpha)=\cH_3(\alpha)^{\perp_{\Hall}}$ by
\eqref{eq:Delta-H-adjoint-again}, and \eqref{eq:p3sharp-content}
gives the stated eigenvalue.
\end{proof}

\section{Integral normal forms and polynomiality}
\label{sec:integral-normal-forms}

The preceding calculation suggests an additional use of the
Sergeev--Veselov recursion.  Although
$\mathscr D_{-1/\alpha}$ is written with the denominator $\alpha$,
stabilization and Hall duality leave only polynomial coefficients in
$\alpha$.  This phenomenon is already visible in
\eqref{eq:Delta2-explicit-new} and \eqref{eq:Delta3-explicit}.

To state the integrality correctly, sums over ordered indices should be
grouped into their orbits.  For example, when $i\ne j$, the two ordered
pairs $(i,j)$ and $(j,i)$ have the same normally ordered block.  Hence
the apparent factor $3/2$ in \eqref{eq:Delta3-explicit} contributes
$3(i+j-2)$ on such an orbit; when $i=j$, the factor $i+j-2$ is even.
Thus all orbit coefficients in \eqref{eq:Delta3-explicit} belong to
$\mathbb Z[\alpha]$.

\begin{proposition}\label{prop:integrality-first-operators}
After collecting identical normally ordered blocks, every coefficient
of $\Delta_2(\alpha)$ and $\Delta_3(\alpha)$ belongs to
$\mathbb Z[\alpha]$.
\end{proposition}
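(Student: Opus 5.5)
The proof is a direct inspection of the explicit formulas \eqref{eq:Delta2-explicit-new} and \eqref{eq:Delta3-explicit}, which we may take as established by \cref{thm:Delta2-explicit} and \cref{thm:Delta3-explicit}. The plan is to rewrite each ordered index sum as a sum over normally ordered blocks $(I\mid J)=p_ID_{p_J}$, where $I$ and $J$ are multisets. We then compute the coefficient of each block as a sum over the ordered index tuples that produce it, and show that this coefficient is a polynomial in $\alpha$ with integer coefficients. Because each block $(I\mid J)$ is a fixed monomial in commuting $p$'s times commuting $D$'s, the blocks are linearly independent. So "collecting identical blocks" is well defined, and it suffices to check each family of terms separately.

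\textbf{$\Delta_2(\alpha)$.} Each of the three sums in \eqref{eq:Delta2-explicit-new} has coefficient $\alpha$, $1$, or $(\alpha-1)(k-1)$ on every ordered index tuple. Collecting the ordered pairs $(i,j)$ and $(j,i)$ multiplies a block coefficient by $1$ or $2$. Hence every orbit coefficient already lies in $\mathbb Z[\alpha]$.

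\textbf{$\Delta_3(\alpha)$, triple sum.} The six terms of the triple sum have coefficients $1$, $\alpha$, or $\alpha^2$ per ordered triple $(i,j,k)$. When several ordered triples give the same block, the orbit coefficient is a positive integer multiple of these, hence in $\mathbb Z[\alpha]$. One must also account for coincidences between different terms, for example $\alpha p_{i+k}p_jD_{i+j}D_k$ and $\alpha p_ip_{j+k}D_{i+k}D_j$ producing the same block after relabelling. Such coincidences only add integer multiples, so they cause no difficulty.

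\textbf{$\Delta_3(\alpha)$, the two terms with fractional coefficients.}
\begin{itemize}
\item For $\frac32(\alpha-1)(i+j-2)(\alpha p_ip_jD_{i+j}+p_{i+j}D_iD_j)$, the block depends only on the unordered pair $\{i,j\}$. If $i\ne j$, the orbit coefficient is $3(\alpha-1)(i+j-2)$ times $\alpha$ or $1$. If $i=j$, then $i+j-2=2(i-1)$, and the coefficient becomes $3(\alpha-1)(i-1)$ times $\alpha$ or $1$. In both cases it lies in $\mathbb Z[\alpha]$.
\item The diagonal term $(\alpha-1)(2\alpha-1)\binom{k-1}{2}p_kD_k$ is integral because $\binom{k-1}{2}\in\mathbb Z$.
\end{itemize}
These blocks may coincide with blocks from the triple sum, for instance $p_{i+j}D_iD_j$ with $p_{i+j+k}D_iD_jD_k$ only when the multisets agree. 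Since the multisets have different sizes, no such coincidence occurs. In any case a sum of elements of $\mathbb Z[\alpha]$ stays in $\mathbb Z[\alpha]$.

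The only real obstacle is the factor $\frac32$, and it is handled by the parity observation above, which the text preceding the proposition already records. The rest of the argument is bookkeeping.
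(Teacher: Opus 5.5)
Your proposal is correct and follows the paper's proof: you inspect the explicit formulas directly, use the orbit argument to turn $\frac32$ into $3$ when $i\ne j$ and the evenness of $i+j-2$ when $i=j$, and note that $\binom{k-1}{2}$ is an integer. One small precision: the triple-sum term $p_{i+j+k}D_{i+j+k}$ does produce the same blocks $p_nD_n$ as the one-index term, so some blocks from different families do coincide, but your closing remark that sums of elements of $\mathbb Z[\alpha]$ stay in $\mathbb Z[\alpha]$ already covers this.
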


\begin{proof}
For $\Delta_2$ this is immediate from
\eqref{eq:Delta2-explicit-new}.  For $\Delta_3$, the six triple sums
have coefficients in $\mathbb Z[\alpha]$.  The preceding orbit
argument removes the only apparent denominator in the two-index term,
and $\binom{k-1}{2}$ is integral in the one-index term.
\end{proof}

More generally, iteration of \eqref{eq:SV-recursion} produces only
integer multiplicities before the substitutions
$k=-\alpha^{-1}$ and the stable normalization are made.  It therefore
reduces polynomiality to two precise questions: cancellation of the
formal dimension $p_0$, and cancellation of the negative powers of
$\alpha$.  The first cancellation follows spectrally for every stable
content Hamiltonian.  The second is proved above in the first two
nontrivial cases, but a uniform integral normalization of all higher
Hamiltonians remains to be established.

\begin{conjecture}\label{conj:integral-normal-form}
For every reduced cycle type $\mu$, the stable multiplication operator
$\Delta_\mu(\alpha)$ has, after collecting identical normally ordered
blocks, coefficients in $\mathbb Z[\alpha]$.
\end{conjecture}

A polynomiality theorem for the stable structure constants of Jack
characters is already known.  Do\l\k{e}ga and F\'eray prove in
\cite[Theorem~1.4]{DolegaFerayGaussian} that, in their normalization,
the structure constants are polynomials in
$\gamma=\alpha^{-1/2}-\alpha^{1/2}$, with explicit parity and degree
bounds.  Thus \cref{conj:integral-normal-form} is not meant as a new
polynomiality statement for the abstract stable product.  It asks for
a compatible integral normal form of the multiplication operators, and
for a direct explanation of polynomiality by the
Sergeev--Veselov normal-ordering algorithm.  Comparing precisely the two
normalizations should clarify which part of the known theorem is already
visible at the operator level.

This assertion is weaker than the $b$-conjecture.  It predicts neither
positivity in $b=\alpha-1$ nor a combinatorial interpretation of the
coefficients.  It would nevertheless give an operator-theoretic route
to the polynomiality of the Jack structure coefficients.  To deduce
that statement from \cref{conj:integral-normal-form}, one must in
addition control integrally the triangular change of basis between the
stable class series and the content Hamiltonians; normal ordering alone
does not supply this last step.

\section{From the Jack Hamiltonian to the affine Yangian}
\label{sec:Jack-Virasoro}

We now return to the question that motivates the comparison with
Frenkel and Wang.  In the Schur case, the commutators of the
cut-and-join operator with the Heisenberg modes are precisely the
Virasoro modes, and further commutators generate
$\mathcal W_{1+\infty}$.  For general $\alpha$, the first commutators
remain quadratic, but their linear terms depend on the mode.  They have
a familiar Feigin--Fuchs interpretation; this gives a Virasoro
completion, but not the sought deformation of the Frenkel--Wang
construction.  The latter is supplied by the affine Yangian of
$\mathfrak{gl}_1$.

\subsection{The direct commutator calculation}

Recall our Jack Heisenberg convention
\begin{equation}
 a_{-k}=\alpha p_k,\qquad a_k=D_k,\qquad
 [a_k,a_{-l}]=\alpha k\delta_{kl}\quad(k,l\geq1).
\end{equation}
A direct calculation from \eqref{eq:Delta2-explicit-new} gives, for
$k\geq1$,
\begin{align}
 [\Delta_2(\alpha),a_{-k}]
 ={}&k\sum_{i=1}^{k-1}a_{-i}a_{-(k-i)}
 +2k\sum_{j\geq1}a_{-(k+j)}a_j
 \notag\\
 &+(\alpha-1)k(k-1)a_{-k},
 \label{eq:Delta-a-minus}\\
 [\Delta_2(\alpha),a_k]
 ={}&-k\sum_{i=1}^{k-1}a_i a_{k-i}
 -2k\sum_{j\geq1}a_{-j}a_{k+j}
 \notag\\
 &-(\alpha-1)k(k-1)a_k.
 \label{eq:Delta-a-plus}
\end{align}
Let
\begin{equation}\label{eq:Jack-Sugawara}
 L_n^{(0)}=\frac1{2\alpha}
 \sum_{r\in\mathbb Z}:a_{n-r}a_r:,
 \qquad a_0=0.
\end{equation}
These are the ordinary Sugawara modes for the Heisenberg algebra
\begin{equation}
 [a_m,a_n]=\alpha m\delta_{m,-n}.
\end{equation}
They satisfy
\begin{align}
 [L_m^{(0)},a_n]&=-n a_{m+n},
 \label{eq:Sugawara-Heisenberg}\\
 [L_m^{(0)},L_n^{(0)}]
 &=(m-n)L_{m+n}^{(0)}
 +\frac{m^3-m}{12}\delta_{m,-n}.
 \label{eq:Sugawara-Virasoro}
\end{align}
For negative indices, \eqref{eq:Delta-a-minus} can be rewritten as
\begin{equation}\label{eq:negative-FF-from-Delta}
 \frac1{2\alpha k}[\Delta_2(\alpha),a_{-k}]
 =L_{-k}^{(0)}+Q_\alpha(1-k)a_{-k},
 \qquad
 Q_\alpha=-\frac{\alpha-1}{2\alpha}.
\end{equation}
Thus this half of the commutator calculation has precisely the
Feigin--Fuchs dependence $Q_\alpha(n+1)a_n$, with $n=-k$.  After the
usual rescaling of the free boson, it extends to the standard
background-charge realization of Virasoro.  The numerical expression
for the central charge depends on the normalization of the boson and
of the Jack parameter; in the convention of Awata--Matsuo--Odake--
Shiraishi it is
\begin{equation}\label{eq:Awata-central-charge}
 c=1-\frac{6(1-\beta)^2}{\beta}.
\end{equation}
Their operators with indices $n\geq-1$ are constraints on a
Selberg--Aomoto partition function, and
the Hamiltonian is expressed
as a normally ordered sum of a creation mode times a Virasoro mode,
up to a momentum term \cite{AwataEtAl}.

The positive-index expression obtained directly from
\eqref{eq:Delta-a-plus} contains $k-1$ rather than $k+1$.  Hence the two
halves extracted from the same commutator prescription do not form one
Feigin--Fuchs family.  Completing \eqref{eq:negative-FF-from-Delta} by
the abstract Feigin--Fuchs formula is legitimate, but it does not
generalize the Frenkel--Wang mechanism, whose purpose is to generate
the ambient operator algebra from the Hamiltonian and Heisenberg modes.

\subsection{The affine-Yangian completion}

The correct completion is visible in the Drinfeld presentation of the
affine Yangian of $\mathfrak{gl}_1$.  Let
\begin{equation}\label{eq:Yangian-currents}
 e(z)=\sum_{r\geq0}e_rz^{-r-1},\qquad
 f(z)=\sum_{r\geq0}f_rz^{-r-1},\qquad
 \psi(z)=1+\sigma_3\sum_{r\geq0}\psi_rz^{-r-1},
\end{equation}
where $\sigma_3=h_1h_2h_3$.  The coefficients of $\psi(z)$ form the
commuting Cartan family.

In the standard Drinfeld normalization, the defining relations include
\begin{align}
 [e_i,f_j]&=\psi_{i+j},\label{eq:Yangian-ef}\\
 [\psi_3,e_j]&=6e_{j+1}+2\sigma_3\psi_0e_j,
 \label{eq:Yangian-psi3-e}\\
 [\psi_3,f_j]&=-6f_{j+1}-2\sigma_3\psi_0f_j.
 \label{eq:Yangian-psi3-f}
\end{align}
See \cite{Tsymbaliuk}; the translation between the Yangian generators,
the Maulik--Okounkov $R$-matrix and the Nazarov--Sklyanin Hamiltonians
is made explicit in \cite{Prochazka}.  Consequently,
\begin{align}
 e_{j+1}&=\frac16\bigl([\psi_3,e_j]
              -2\sigma_3\psi_0e_j\bigr),
 \label{eq:Yangian-e-recursion}\\
 f_{j+1}&=-\frac16\bigl([\psi_3,f_j]
              +2\sigma_3\psi_0f_j\bigr).
 \label{eq:Yangian-f-recursion}
\end{align}
These formulas are the deformed analogue of the commutator recursion
used by Frenkel and Wang.  They also explain why a search restricted to
quadratic Virasoro operators is too small: for generic $\alpha$, the
successive raising and lowering operators satisfy the nonlinear
affine-Yangian relations.

We now fix the normalization in our Fock representation.  With
\eqref{eq:Yangian-parameters}, one has
\begin{equation}\label{eq:sigma3-specialized}
 \sigma_3=\alpha(\alpha-1).
\end{equation}
Introduce the rescaled oscillators
\begin{equation}\label{eq:Prochazka-bosons}
 b_{-k}=p_k=\frac1\alpha a_{-k},\qquad
 b_k=\frac1\alpha D_k=\frac1\alpha a_k,
 \qquad k\geq1.
\end{equation}
They satisfy
\begin{equation}
 [b_k,b_{-l}]=\frac{k}{\alpha}\delta_{kl}
 =-\frac{k}{h_1h_2}\delta_{kl},
\end{equation}
which is the one-boson convention used in
\cite[Appendix~B]{Prochazka}.

\begin{proposition}\label{prop:first-Yangian-modes}
In the multiplication convention of this paper, the first Yangian
modes act on $\Sym$ as
\begin{align}
 e_0&=p_1,&
 f_0&=-\frac1\alpha D_1,&
 \psi_0&=\frac1\alpha,&
 \psi_1&=0,\label{eq:first-Yangian-zero-modes}\\
 \psi_2&=2E,&
 \psi_3&=3\Delta_2(\alpha)+2(\alpha-1)E,
 \label{eq:psi3-Delta2-exact}\\
 e_1&=\sum_{j\geq1}p_{j+1}D_j,&
 f_1&=-\sum_{j\geq1}p_jD_{j+1}.
 \label{eq:first-Yangian-ladder-modes}
\end{align}
Equivalently, on the idempotent basis diagonalizing the deformed class
product, the eigenvalue of $\psi_3$ is
\begin{equation}\label{eq:psi3-eigenvalue}
 6C_1^{(\alpha)}(\lambda)+2(\alpha-1)|\lambda|.
\end{equation}
On the spectral side, the Hall adjoint of
\eqref{eq:psi3-Delta2-exact} gives the corresponding formula with
$\cH_2(\alpha)$ in place of $\Delta_2(\alpha)$.
\end{proposition}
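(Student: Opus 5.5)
The plan is to fix the Fock representation of the affine Yangian concretely, following \cite[Appendix~B]{Prochazka}, in the rescaled bosons \eqref{eq:Prochazka-bosons} with the parameters \eqref{eq:Yangian-parameters}. In that convention the lowest modes are $e_0=b_{-1}$, $f_0=-b_1$ and $\psi_0=-1/(h_1h_2)$, which gives $e_0=p_1$, $f_0=-\alpha^{-1}D_1$ and $\psi_0=\alpha^{-1}$. The relation $[e_0,f_0]=\psi_0$ is then the check $[p_1,-\alpha^{-1}D_1]=\alpha^{-1}$. The remaining modes in \eqref{eq:first-Yangian-zero-modes}--\eqref{eq:first-Yangian-ladder-modes} will be produced from these by the Drinfeld relations and from the known Cartan spectrum. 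The Hall-adjoint reversal \eqref{eq:adjoint-convention} will be tracked throughout.

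The key step is $\psi_3$, because the zero modes do not determine it. I would pin it down spectrally. In the Fock module the Cartan current $\psi(z)$ is diagonal on the basis labelled by partitions. Its eigenvalue is the vacuum factor $1+\sigma_3\psi_0/z$ times a product over boxes of the standard structure function $\varphi(z-h_\square)$, where $h_\square=h_1(j-1)+h_2(i-1)=c_\alpha(\square)$. Expanding in $z^{-1}$ gives the coefficients:
\begin{itemize}
\item $\psi_1$ has eigenvalue $0$;
\item $\psi_2$ has eigenvalue proportional to $|\lambda|$, which fixes $\psi_2=2E$;
\item $\psi_3$ has eigenvalue $6\sum_\square c_\alpha(\square)$ plus a multiple of $\sigma_3\psi_0|\lambda|=(\alpha-1)|\lambda|$.
\end{itemize}
I expect this multiple to be $2$, which gives \eqref{eq:psi3-eigenvalue}. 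Finding the precise $z^{-4}$ coefficient, with the normalization $\sigma_3$ placed in front of $\psi$ in \eqref{eq:Yangian-currents}, is the main obstacle. I would first do this expansion. I would then cross-check it by requiring \eqref{eq:Yangian-psi3-e} to close on $e_0$, as in the next paragraph.

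Next I transfer the spectrum to operators. The multiplication convention means the diagonalizing basis is the Hall-dual Jack basis, on which $\Delta_2(\alpha)$ acts by $2C_1^{(\alpha)}$ (\cref{thm:Delta2-explicit}) and $E$ acts by $|\lambda|$, since $E$ is Hall self-adjoint. So $3\Delta_2(\alpha)+2(\alpha-1)E$ has the required eigenvalues on that basis. Since $\psi_3$ is diagonal there, this gives \eqref{eq:psi3-Delta2-exact}. Taking Hall adjoints gives the spectral-side version with $\cH_2(\alpha)$.

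Then I obtain the ladder modes from \eqref{eq:Yangian-e-recursion} and \eqref{eq:Yangian-f-recursion} with $j=0$. From \eqref{eq:Delta-a-minus} at $k=1$ one has $[\Delta_2(\alpha),p_1]=2\sum_j p_{j+1}D_j$, and also $[E,p_1]=p_1$. Hence
\[
[\psi_3,e_0]=6\sum_j p_{j+1}D_j+2(\alpha-1)p_1 .
\]
The correction term is $2\sigma_3\psi_0e_0=2(\alpha-1)p_1$, which cancels exactly and leaves $e_1$. The same calculation with \eqref{eq:Delta-a-plus} gives $f_1$. This exact cancellation of the linear term is the consistency check on the constant $2(\alpha-1)$ in $\psi_3$. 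As a final check, $[e_0,f_1]=0=\psi_1$ and $[e_1,f_0]=0$ follow from the fact that $p_1$ commutes with every $D_{j+1}$ for $j\ge1$. The commutator $[e_1,f_1]$ should reproduce $\psi_2=2E$, which confirms the overall normalization.
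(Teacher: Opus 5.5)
Your route is sound but runs in the opposite direction from the paper's. The paper takes $e_0,f_0,\psi_2$ and the ladder modes $e_1,f_1$ directly from Proch\'azka's one-boson realization. It then checks that $3\Delta_2(\alpha)+2(\alpha-1)E$ satisfies the $j=0$ cases of \eqref{eq:Yangian-psi3-e}--\eqref{eq:Yangian-psi3-f}, using $[\Delta_2(\alpha),e_0]=2e_1$ and $[\Delta_2(\alpha),f_0]=-2f_1$, and reads the eigenvalue off \cref{thm:Delta2-explicit}. You instead import the Cartan spectrum and derive $e_1,f_1$ from the recursion.

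Your expansion closes as you expect. Since $h_1+h_2+h_3=0$, one has $\log\varphi(u)=2\sigma_3u^{-3}+O(u^{-5})$. So the eigenvalue of $\psi(u)$ is $(1+\sigma_3\psi_0u^{-1})\bigl(1+2\sigma_3|\lambda|u^{-3}+6\sigma_3\sum_\square h_\square u^{-4}+\cdots\bigr)$. Dividing the $u^{-4}$ coefficient by $\sigma_3$ (for $\alpha\ne1$, then by continuity) gives $6C_1^{(\alpha)}(\lambda)+2\sigma_3\psi_0|\lambda|$, with $\sigma_3\psi_0=\alpha-1$. Your route explains the constant as a vacuum effect and determines $\psi_3$ uniquely. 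The two $j=0$ relations used in the paper fix $\psi_3$ only modulo the commutant of $p_1$ and $D_1$, which contains for instance $p_2D_2$.

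The one input you never derive is the diagonalizing basis, and none of your checks can detect it. Put $Tp_\mu=\alpha^{\ell(\mu)}p_\mu$. Apply $X\mapsto T^{-1}XT$ to your modes, then the automorphism $e_j\mapsto\alpha e_j$, $f_j\mapsto\alpha^{-1}f_j$. This gives another representation with the same $e_0$, $f_0$, $\psi_0$ and $\psi_2=2E$, but with
\[
 \psi_3=3\cH_2(\alpha)+2(\alpha-1)E,\qquad
 e_1=\alpha\sum_{j\geq1}p_{j+1}D_j,\qquad
 f_1=-\frac1\alpha\sum_{j\geq1}p_jD_{j+1}.
\]
This $\psi_3$ is diagonal on $P_\lambda^{(\alpha)}$ with exactly the eigenvalues \eqref{eq:psi3-eigenvalue}. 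It also satisfies $[e_1,f_0]=[e_0,f_1]=0$ and $[e_1,f_1]=2E$.

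So your sentence identifying the diagonalizing basis with the Hall-dual Jack basis carries the whole Hall-adjoint reversal. You must either take it as the definition of the multiplication convention, which the statement's ``Equivalently'' allows, or import it as the paper does, through Proch\'azka's explicit $e_1$ and $f_1$.

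A smaller point: because you define $e_1$ by the recursion, the cancellation of $2(\alpha-1)p_1$ is automatic and is not a check. The genuine test of the constant is $[e_1,f_0]=\psi_1=0$. A wrong coefficient $\kappa$ of $E$ would violate it by $(\kappa-2(\alpha-1))/(6\alpha)$.
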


\begin{proof}
The normalization $e_0=b_{-1}$ and $f_0=-b_1$ is
\cite[(4.26)]{Prochazka}.  Hence
\begin{equation}
 [e_0,f_0]=\frac1\alpha=\psi_0.
\end{equation}
The same one-boson realization gives
\begin{equation}
 \psi_2=-2h_1h_2\sum_{j\geq1}b_{-j}b_j=2E
\end{equation}
and the expressions in
\eqref{eq:first-Yangian-ladder-modes}; see
\cite[Appendix~B]{Prochazka}.  They may also be recovered directly from
our cut-and-join operator.  Indeed, the case $k=1$ of
\eqref{eq:Delta-a-minus} gives
\begin{equation}\label{eq:Delta-e0-low}
 [\Delta_2(\alpha),e_0]
 =2\sum_{j\geq1}p_{j+1}D_j=2e_1.
\end{equation}
Since $[E,e_0]=e_0$ and
$\sigma_3\psi_0=\alpha-1$, equation
\eqref{eq:psi3-Delta2-exact} yields
\begin{equation}
 [\psi_3,e_0]=6e_1+2\sigma_3\psi_0e_0,
\end{equation}
which is \eqref{eq:Yangian-psi3-e} for $j=0$.  Similarly,
\eqref{eq:Delta-a-plus} at $k=1$ gives
\begin{equation}
 [\Delta_2(\alpha),f_0]=-2f_1,
\end{equation}
and $[E,f_0]=-f_0$, whence
\begin{equation}
 [\psi_3,f_0]=-6f_1-2\sigma_3\psi_0f_0.
\end{equation}
This proves the asserted normalization of $\psi_3$.  Finally, a direct
telescoping calculation with \eqref{eq:first-Yangian-ladder-modes}
gives
\begin{equation}
 [e_1,f_1]=2E=\psi_2,
 \qquad [e_1,f_0]=[e_0,f_1]=0=\psi_1,
\end{equation}
which checks the first instances of \eqref{eq:Yangian-ef}.  Formula
\eqref{eq:psi3-eigenvalue} follows from
\cref{thm:Delta2-explicit}.
\end{proof}

The next modes already show the form of the Yangian deformation.  We
use the conventions $p_0=D_0=0$ in the formulas below.

\begin{proposition}\label{prop:second-Yangian-modes}
The second raising and lowering modes are recovered from the Jack
cut-and-join operator by
\begin{equation}\label{eq:e2-f2-commutator}
  {
 e_2=\frac12[\Delta_2(\alpha),e_1],\qquad
 f_2=-\frac12[\Delta_2(\alpha),f_1].}
\end{equation}
Their normally ordered expressions are
\begin{align}
e_2={}&
 \frac{\alpha}{2}\sum_{i,j\geq1}(i+j)p_ip_jD_{i+j-1}
 -\alpha\sum_{i,j\geq1}i p_{i+1}p_jD_{i+j}
 \notag\\
&+\sum_{i,j\geq1}i p_{i+j}D_{i-1}D_j
 -\frac12\sum_{i,j\geq1}(i+j)p_{i+j+1}D_iD_j
 \notag\\
&+\frac{\alpha-1}{2}\sum_{k\geq1}k(k-1)
       \bigl(p_kD_{k-1}-p_{k+1}D_k\bigr),
\label{eq:e2-normal-ordered}\\[4pt]
f_2={}&
 -\alpha\sum_{i,j\geq1}i p_{i-1}p_jD_{i+j}
 +\frac{\alpha}{2}\sum_{i,j\geq1}(i+j)p_ip_jD_{i+j+1}
 \notag\\
&-\frac12\sum_{i,j\geq1}(i+j)p_{i+j-1}D_iD_j
 +\sum_{i,j\geq1}i p_{i+j}D_{i+1}D_j
 \notag\\
&-\frac{\alpha-1}{2}\sum_{k\geq1}k(k-1)
       \bigl(p_{k-1}D_k-p_kD_{k+1}\bigr).
\label{eq:f2-normal-ordered}
\end{align}
In particular, these operators satisfy the next relations
\begin{equation}\label{eq:second-Yangian-low-checks}
 [e_2,f_0]=[e_0,f_2]=\psi_2=2E.
\end{equation}
\end{proposition}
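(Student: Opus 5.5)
The plan has three steps. First, justify the commutator formulas \eqref{eq:e2-f2-commutator} from the Yangian relations. Second, compute the normal forms. Third, check \eqref{eq:second-Yangian-low-checks}.

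For the first step, take $j=1$ in \eqref{eq:Yangian-e-recursion}. Substituting $\psi_3=3\Delta_2(\alpha)+2(\alpha-1)E$ from \cref{prop:first-Yangian-modes} and $\sigma_3\psi_0=\alpha-1$ gives
\begin{equation*}
 e_2=\tfrac16\bigl(3[\Delta_2(\alpha),e_1]+2(\alpha-1)[E,e_1]-2(\alpha-1)e_1\bigr).
\end{equation*}
Since $e_1=\sum_j p_{j+1}D_j$ raises degree by one, $[E,e_1]=e_1$. The last two terms therefore cancel, leaving $e_2=\frac12[\Delta_2(\alpha),e_1]$. Symmetrically, $[E,f_1]=-f_1$, and \eqref{eq:Yangian-f-recursion} gives $f_2=-\frac12[\Delta_2(\alpha),f_1]$. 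This is exactly the same cancellation as in the $j=0$ case of the proof of \cref{prop:first-Yangian-modes}.

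For the normal forms, I would exploit the fact that $e_1=\frac12[\Delta_2(\alpha),e_0]$ by \eqref{eq:Delta-e0-low}. The computation can be organised block by block using the Heisenberg rule $[D_a,p_b]=a\delta_{ab}$. The bracket of $\Delta_2(\alpha)$, written as in \eqref{eq:Delta2-explicit-new}, with the linear block $p_{j+1}D_j$ produces terms of two kinds.
\begin{itemize}
 \item Contracting once gives cubic blocks of the shapes $(ij\mid\cdot)$ and $(\cdot\mid ij)$.
 \item The diagonal correction $(\alpha-1)\sum(k-1)p_kD_k$ contributes $(\alpha-1)\sum_j\bigl(j-(j-1)\bigr)\cdots$ type terms, which give the last lines.
\end{itemize}
I would treat the three summands of $\Delta_2(\alpha)$ separately.

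1. The join term $\alpha\sum p_ip_jD_{i+j}$ gives $\alpha$ times quadratic-in-$p$ blocks. Its contraction with $p_{j+1}$ yields the $(i+j)p_ip_jD_{i+j-1}$ term. The contraction of $D_j$ against $p_ip_j$ yields the $-\alpha\,i\,p_{i+1}p_jD_{i+j}$ term.
2. The cut term $\sum p_{i+j}D_iD_j$ similarly yields the two $DD$ blocks.
3. The diagonal term gives $\frac{\alpha-1}2\sum_k$ with a coefficient obtained from $(k-1)$ versus $(k-2)$ differences, after reindexing to the displayed symmetric form.

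Reindexing and symmetrising in $(i,j)$ should give the factors $\frac12$ and $(i+j)$. For $f_2$ there are two routes. One can apply the Hall adjoint, since $f_1=-e_1^{\perp_{\Hall}}$ up to the $\alpha$-dependent asymmetry of $\Delta_2(\alpha)^{\perp}=\cH_2(\alpha)$. Since $\Delta_2$ is not Hall self-adjoint, I would instead simply repeat the computation, using \eqref{eq:Delta-a-plus} as the template.

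For \eqref{eq:second-Yangian-low-checks}, I would avoid brute force where possible. Since $f_0=-\frac1\alpha D_1$, the relation $[e_2,f_0]=\frac1{2\alpha}[D_1,[\Delta_2(\alpha),e_1]]$ can be expanded by the Jacobi identity into
\begin{equation*}
 \tfrac1{2\alpha}\bigl([[D_1,\Delta_2(\alpha)],e_1]+[\Delta_2(\alpha),[D_1,e_1]]\bigr).
\end{equation*}
Here $[D_1,e_1]=D_1p_2D_1-p_2D_1D_1$, and this vanishes because $D_1$ and $p_2$ commute. So only the first term survives. By \eqref{eq:Delta-a-plus} at $k=1$, $[D_1,\Delta_2(\alpha)]=2\sum_j p_jD_{j+1}=-2f_1$, so $[e_2,f_0]=-\frac1\alpha[f_1,e_1]=\frac1\alpha[e_1,f_1]$. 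Comparing with $[e_1,f_1]=2E$ from \cref{prop:first-Yangian-modes}, there seems to be a factor $\alpha$ discrepancy. I would recheck the normalization of $f_0$ against $f_1$ there, or else confirm \eqref{eq:second-Yangian-low-checks} directly from the explicit normal forms by a telescoping computation. The analogous argument applies to $[e_0,f_2]$.

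The main obstacle I expect is bookkeeping. This includes the boundary terms from the conventions $p_0=D_0=0$, the symmetrisation of the cubic sums, and the consistency of these $\alpha$-normalizations.
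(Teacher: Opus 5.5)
Your first step coincides with the paper's argument. Your plan for the normal forms is the same computation the paper performs. It is cleanest to treat $\operatorname{ad}e_1$ and $\operatorname{ad}f_1$ as derivations, using $[e_1,p_k]=kp_{k+1}$, $[e_1,D_k]=-kD_{k-1}$, $[f_1,p_k]=-kp_{k-1}$ and $[f_1,D_k]=kD_{k+1}$ on each factor of the three lines of \eqref{eq:Delta2-explicit-new}. Done this way, the computation reproduces \eqref{eq:e2-normal-ordered} and \eqref{eq:f2-normal-ordered} exactly. Your hesitation about the Hall-adjoint route is unnecessary: $f_1=-e_1^{\perp_{\Hall}}$ holds exactly, so $f_2=-\bigl(\tfrac12[\cH_2(\alpha),e_1]\bigr)^{\perp_{\Hall}}$. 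The genuine problem is the last step. As written, your proposal does not prove \eqref{eq:second-Yangian-low-checks}; instead it suggests a factor-$\alpha$ inconsistency that does not exist.

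The slip is in how you read \eqref{eq:Delta-a-plus} at $k=1$. There $a_{-j}=\alpha p_j$, so
\[
 [\Delta_2(\alpha),D_1]=-2\alpha\sum_{j\geq1}p_jD_{j+1}=2\alpha f_1 .
\]
Thus $[D_1,\Delta_2(\alpha)]=-2\alpha f_1$, not $-2f_1$. You can see this directly from \eqref{eq:Delta2-explicit-new}: only the join term contributes, because the cut term contains $p_{i+j}$ with $i+j\geq2$, and the diagonal term has coefficient $k-1=0$ at $k=1$. This is exactly the relation $[\Delta_2(\alpha),f_0]=-2f_1$ already established in \cref{prop:first-Yangian-modes}. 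So the normalizations of $f_0$ and $f_1$ are consistent.

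With the correct factor, your Jacobi computation gives $[e_2,f_0]=\frac1{2\alpha}[-2\alpha f_1,e_1]=[e_1,f_1]=2E$. For the other relation, use $[e_0,f_1]=0$ and $[\Delta_2(\alpha),e_0]=2e_1$ to get $[e_0,f_2]=-\frac12[[e_0,\Delta_2(\alpha)],f_1]=[e_1,f_1]=2E$. This is precisely the paper's argument. Had you carried out the $[e_0,f_2]$ case, your version would have produced $[e_2,f_0]=\alpha^{-1}[e_0,f_2]$, and that asymmetry should have flagged the error.

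A minor point: $[D_1,e_1]=\sum_j[D_1,p_{j+1}]D_j=0$ because $j+1\geq2$ for every $j$. What you wrote is only the $j=1$ summand.
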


\begin{proof}
Since $[E,e_1]=e_1$, equations
\eqref{eq:Yangian-e-recursion}, \eqref{eq:psi3-Delta2-exact} and
$\sigma_3\psi_0=\alpha-1$ give
\begin{align*}
6e_2
 &= [\psi_3,e_1]-2\sigma_3\psi_0e_1\\
 &=3[\Delta_2(\alpha),e_1].
\end{align*}
The argument for $f_2$ is identical, using $[E,f_1]=-f_1$.

For completeness, set
\begin{equation}
 X=e_1=\sum_{r\geq1}p_{r+1}D_r,\qquad
 Y=-f_1=\sum_{r\geq1}p_rD_{r+1}.
\end{equation}
The elementary commutators
\begin{align}
 [X,p_k]&=k p_{k+1},& [X,D_k]&=-kD_{k-1},
 \label{eq:X-elementary-commutators}\\
 [Y,p_k]&=k p_{k-1},& [Y,D_k]&=-kD_{k+1}
 \label{eq:Y-elementary-commutators}
\end{align}
applied to the three lines of
\eqref{eq:Delta2-explicit-new} give
\eqref{eq:e2-normal-ordered} and
\eqref{eq:f2-normal-ordered}.  Finally, the Jacobi identity and the
relations already verified in \cref{prop:first-Yangian-modes} yield
\begin{align*}
 [e_2,f_0]
 &=\frac12[[\Delta_2,e_1],f_0]
   =[e_1,f_1]=\psi_2,\\
 [e_0,f_2]
 &=-\frac12[e_0,[\Delta_2,f_1]]
   =[e_1,f_1]=\psi_2.
\end{align*}
\end{proof}

With the parameters \eqref{eq:Yangian-parameters}, one has
$\sigma_3=0$ at $\alpha=1$.  The correction terms in
\eqref{eq:Yangian-psi3-e}--\eqref{eq:Yangian-psi3-f} then disappear,
and the affine-Yangian recursion specializes to the linear
$U(\mathcal W_{1+\infty})$ construction.  This, rather than the
Feigin--Fuchs completion, is the precise continuation of the
picture of \cite{FrenkelWang,LascouxThibon}.

%\begin{remark}[An auxiliary charge-one Virasoro]
%The previous version of this argument introduced the power-sum
%diagonal operator
%\begin{equation}
% H=\sum_{k\geq1}k p_kD_k
%\end{equation}
%and replaced $\Delta_2(\alpha)$ by
%$\Delta_2(\alpha)-(\alpha-1)H$.  The normalized commutators then become
%$L_n^{(0)}+\kappa a_n$, with
%$\kappa=-(\alpha-1)/(2\alpha)$; after shifting $L_0$ this gives a
%charge-one Virasoro algebra.  The calculation is correct, but $H$ is
%diagonal in the power-sum basis, not in the Jack basis.  The corrected
%operator is therefore not a Jack Hamiltonian, and this auxiliary
%Virasoro algebra should not be interpreted as the natural Jack
%deformation of the Frenkel--Wang construction.
%\end{remark}

\section{The content series in the deformed algebra}
\label{sec:deformed-W-conclusion}

The parallel with \cite{LascouxThibon} can now be stated at the level
of the first two nontrivial class operators.  The classical vertex
operator packages the commuting zero modes given by ordinary content
power sums.  In the Jack case, the corresponding degree-zero Cartan
series is
\begin{equation}\label{eq:deformed-Cartan-series-final}
 \mathbf C^{(\alpha)}(v)
 =\frac{\cQ^{\exp}(\alpha v)}
 {(1-e^{-v})(e^{\alpha v}-1)},
\end{equation}
and its eigenvalue on $P_\lambda^{(\alpha)}$ is
$\sum_{\square\in\lambda}e^{v c_\alpha(\square)}$.  Formula
\eqref{eq:deformed-Cartan-series-final} is therefore the exact Jack
analogue of the zero-mode part of the series \eqref{eq:LT-D-vertex} 
of \cite{LascouxThibon}.

In the affine-Yangian presentation, the coefficients of
\eqref{eq:deformed-Cartan-series-final} belong to the commuting Cartan
subalgebra.  The positive and negative halves are generated from the
first creation and annihilation operators by the defining commutators
of $\mathbf{SH}^c$.  At $\alpha=1$ these relations linearize to those
of $U(\mathcal W_{1+\infty})$, and the usual vertex
operator simultaneously displays all modes.  For generic $\alpha$,
the analogous object is not expected to be an ordinary
$\widehat{\mathfrak{gl}}_\infty$ vertex operator: its natural
relations are the nonlinear affine-Yangian relations.

The calculation of \cref{sec:Jack-Virasoro} identifies the first
nontrivial layer of this deformation explicitly.  The Heisenberg modes
remain undeformed after normalization, and the content Hamiltonian
$2\mathbf C_1^{(\alpha)}$ yields the Jack cut-and-join operator after
Hall duality.  Its first commutators display the Feigin--Fuchs
linear term, while the affine-Yangian relations specify the recursive
correction needed to generate the full raising and lowering currents.
This is the appropriate deformation of the first step of the 
construction of \cite{LascouxThibon}.

\section{Conclusion}
\label{sec:conclusion}

The classical passage from Goulden's cut-and-join operator to Virasoro,
and then from content power sums to $\mathcal W_{1+\infty}$, admits a
natural Jack deformation inside the stable spherical degenerate double
affine Hecke algebra $\mathbf{SH}^c$.  The essential computational
ingredient is the vacuum-normalized logarithm of the
Heckman--Polychronakos hierarchy.  Its eigenvalues are finite
differences of shifted row power sums, and Bernoulli polynomials convert
them uniformly into power sums of $\alpha$-contents.  The single series
\eqref{eq:content-generating-series} performs this conversion before
coefficient extraction.

Because our multiplication convention is dual to the usual spectral
one, the Jack cut-and-join operators are Hall adjoints of these
Hamiltonians.  This gives a direct proof of the formula for
$\Delta_2(\alpha)$.  At the next order, the normal-ordering recursion of
Sergeev and Veselov, together with the triangular correction between
their hierarchy and the Heckman--Polychronakos hierarchy, gives the
stable form of $\cQ_4$ and proves the explicit formula for
$\Delta_3(\alpha)$.  The cancellation of $p_0$ and of negative powers
of $\alpha$ suggests a general integral form for these operators.
The raw commutators with Heisenberg contain one half of the familiar
Feigin--Fuchs realization.  Their Virasoro completion is useful for
comparison with Selberg--Aomoto constraints, but the analogue of the
construction of \cite{FrenkelWang,LascouxThibon} is instead the recursive
completion by the raising, lowering and Cartan currents of the affine
Yangian.

The ambient-algebra interpretation also clarifies what should and
should not be expected at higher order.  At $\alpha=1$, the Fock module
is the charge-zero sector of the level-one
$\widehat{\mathfrak{gl}}_\infty$ representation and the operators lie
in the image of $U(\mathcal W_{1+\infty})$.  For generic $\alpha$, the
natural object is instead the Fock representation of $\mathbf{SH}^c$,
or equivalently the affine Yangian of $\mathfrak{gl}_1$ with parameters
proportional to $(\alpha,-1,1-\alpha)$.  Its defining relations give an
abstract recursive construction of all raising and lowering modes from
the first modes and the content Hamiltonian.  Determining their closed
normal-ordered expressions in our conventions remains a separate
problem in general; \cref{prop:first-Yangian-modes,prop:second-Yangian-modes}
carry it out explicitly through $e_2$ and $f_2$.  The calculation of
$\Delta_3(\alpha)$ shows, however, that the Dunkl operator at infinity
provides an effective algorithm at low order.  Establishing uniformly
that this algorithm produces an integral form over
$\mathbb Z[\alpha]$, and relating that form to positivity in
$b=\alpha-1$, are natural problems left open by the present paper.

\section*{Acknowledgments}
The author used ChatGPT (OpenAI) as an interactive aid in organizing
the manuscript, editing the English text, and checking
computer-algebra calculations.  All mathematical statements and
conclusions remain the responsibility of the author.

\enlargethispage{4\baselineskip}
\bibliographystyle{alpha}
\bibliography{jack_w_infinity}
\end{document}